\documentclass[11pt]{article}
\usepackage{amsmath,amssymb,amsthm,mathtools}
\usepackage{geometry}
\usepackage{enumitem}
\usepackage{microtype}
\usepackage{enumitem} 
\usepackage{amsmath}
\usepackage[hidelinks]{hyperref}
\hypersetup{
  pdftitle={
    Counterexamples to Whole-Sequence Convergence of
    Variable-Smoothing Full-Splitting Methods
  },
  pdfauthor={Min Tao},
  pdfkeywords={
    structured fractional programming;
    variable smoothing;
    whole-sequence convergence;
    Whitney extension criterion;
    limiting lifted stationarity;
    counterexamples
  }
}

\DeclareMathOperator{\sinc}{sinc}
\numberwithin{equation}{section}
\newtheorem{theorem}{Theorem}[section]
\newtheorem{lemma}[theorem]{Lemma}
\newtheorem{proposition}[theorem]{Proposition}
\newtheorem{assumption}[theorem]{Assumption}

\newtheorem{remark}[theorem]{Remark}

\newtheorem{algorithm}[theorem]{Algorithm}
\newtheorem{definition}[theorem]{Definition}

\newtheorem{corollary}[theorem]{Corollary}
\newcommand{\R}{\mathbb{R}}

\newcommand{\ip}[2]{\left\langle #1,#2\right\rangle}
\newcommand{\norm}[1]{\left\lVert #1\right\rVert}
\newcommand{\Lip}{\operatorname{Lip}}

\newcommand{\Sph}{\mathbb S}
\newcommand{\Sone}{\mathbb S^1}

\newcommand{\clust}{\operatorname{cluster}}

\newcommand{\dist}{\operatorname{dist}}
\newcommand{\prox}{\operatorname{prox}}
\newcommand{\Proj}{\operatorname{Proj}}

\newcommand{\nn}{\nonumber}

\DeclareMathOperator{\rank}{rank}
\DeclareMathOperator{\dom}{dom}

\title{
Counterexamples to Whole-Sequence Convergence of
Variable-Smoothing Full-Splitting Methods
}

\author{
Min Tao\\[1mm]
\small School of Mathematics, Nanjing University\\
\small National Key Laboratory for Novel Software Technology\\
\small Nanjing 210093, China\\
\small \href{mailto:taom@nju.edu.cn}{taom@nju.edu.cn}
}

\date{}
\begin{document}

\maketitle

\begin{abstract}
We study  whole-sequence convergence of the smoothing-based
full-splitting proximal subgradient method (S-FSPS) for structured nonconvex
and nonsmooth fractional programs, introduced  by
Bo\c{t}, Li, and Tao (SIAM J. Optim., 35(4):2623--2653, 2025) as Algorithm~4.1. Existing theory
guarantees only the existence of a subsequence converging to a
limiting lifted stationary point. We show that this guarantee is sharp by constructing two admissible
instances whose corresponding primal sequences both have cluster set
$\{1\}\times\mathbb S^1$ and infinite length, although every cluster
point is a limiting lifted stationary point.  The construction prescribes a slowly rotating spiral and
realizes it exactly through a compatible first-order jet and a
$C^{1,1}$ Whitney extension. In the first instance, $A$ has rank one
and every smoothing-dual iterate is nonzero. In the second, the
feasible set is full-dimensional, $A$ has full row rank, and each of
$f\circ K$, $g\circ A$, and the numerator $g\circ A+h$ is
nonconstant on the feasible set. The first instance also yields a nonconvergent example for the
corresponding variable-smoothing, single-loop, full-splitting method
for nonconvex and nonsmooth composite optimization, although that
method still admits a subsequence converging to an exact stationary
point. Thus, a vanishing but
nonsummable smoothing schedule does not imply whole-sequence
convergence.
\end{abstract}
{\bf Keywords:} structured fractional programming; variable smoothing; whole-sequence convergence;
 Whitney extension criterion; limiting lifted stationarity; counterexamples.
\vspace{0.2cm}

\textbf{Mathematics Subject Classification (2020):}
90C32, 90C26, 49M27, 65K05.

\section{Introduction}\label{sec:introduction}

Whole-sequence convergence is substantially stronger than subsequential
convergence to stationary points. In particular, boundedness of the generated
sequence and stationarity of all its cluster points do not imply convergence
to a single limit. A standard way to upgrade subsequential
convergence to whole-sequence convergence is provided by the
Kurdyka--\L ojasiewicz (KL) framework \cite{AB,ABS,Sun1}. When a suitable KL
inequality is combined with a sufficient-decrease condition, a relative-error
estimate, and appropriate control of perturbation terms, one can often
establish convergence of the entire sequence; see, for example,
\cite{AB,ABS,Sun1}. The distinction between subsequential stationarity and
whole-sequence convergence is central to the present work.


Smoothing methods for nonsmooth optimization have traditionally been analyzed
in terms of objective-value accuracy, iteration complexity, or subsequential
stationarity. Classical contributions include Nesterov's smoothing technique
\cite{Nesterov2005}, the unified framework of Beck and Teboulle
\cite{BeckTeboulle2012}, and Chen's gradient-consistent approximation framework
for nonconvex problems \cite{Chen2012}. Variable-smoothing schemes were later
developed for convex composite optimization
\cite{BotBohm2020,BotHendrich2015} and for nonconvex composite models
\cite{BohmWright2021,LiuXia2024,LongEtAl2026}. Kume and Yamada~\cite{KumeYamada2025} analyze a single-loop
variable-smoothing method for weakly convex composite minimization
with a manifold constraint via parametrization, obtaining
subsequential stationarity rather than whole-sequence convergence.
 These results provide
complexity guarantees for achieving $\varepsilon$-approximate stationarity or
accumulation-point guarantees for exact limiting (lifted) stationary points,
but do not generally establish whole-sequence convergence to an exact
stationary point.

Whole-sequence convergence is available in more specialized settings. Sabach,
Teboulle, and Voldman \cite{SabachTeboulleVoldman2018} establish finite length
and convergence for a clustering model by combining the KL property with a
fixed smoothing parameter.  Bian and Chen~\cite{BianChen2020} establish whole-sequence convergence,
without invoking the KL property, for an exact capped-$\ell_1$ relaxation
of a cardinality-penalized sparse regression model. Their proof exploits
the model-specific lower-bound and support-identification structure of the
capped-$\ell_1$ penalty, together with an adaptive smoothing update satisfying
\(\sum_{k=0}^{\infty}\mu_k^2<+\infty\). However, square summability of the smoothing parameters
alone is insufficient for whole-sequence convergence, as the counterexamples
constructed below demonstrate.


For the structured fractional optimization problem (\ref{eq:fractional-problem}),
Bo\c{t}, Li, and Tao
\cite{BotLiTao2025} introduced the smoothing-based full-splitting proximal
subgradient method (S-FSPS).  The method separates the constituent functions and linear operators
and employs a nonsummable smoothing-parameter sequence that decreases
to zero.
The convergence theory in \cite{BotLiTao2025} guarantees, under the standing
assumptions, the existence of subsequences converging to exact limiting lifted
stationary points.  It does not establish convergence of the entire
sequence.  This leaves open whether
whole-sequence convergence of S-FSPS follows under its standing assumptions.

The KL framework \cite{AB,ABS,Sun1} does not directly resolve this
issue. Standard KL-based convergence analyses typically establish
the finite-length property by combining a KL inequality for a fixed
or suitably augmented Lyapunov function with a sufficient-decrease
estimate, a relative-error bound with uniform coefficients, and
summable perturbation terms. These ingredients are not guaranteed
for S-FSPS under its standing assumptions. In particular, the
assumptions impose neither the KL property nor a structural
condition, such as definability, that would ensure it.

Even if an appropriate KL property is additionally assumed, the
currently available smoothing-dependent estimates do not fit the
standard finite-length framework. As the smoothing parameter tends
to zero, the coefficients in the relative-error estimate become
nonuniform, while some residual terms are neither controlled by the
available descent inequality nor known to be summable. Treating the
smoothing parameter as an additional variable in an augmented
Lyapunov function does not remove these difficulties under the
currently available estimates. Moreover, the prescribed
nonsummable smoothing schedule is incompatible with stronger
summability conditions that arise in existing inexact KL analyses
\cite{Sun1}. Consequently, the available KL theory cannot be applied
directly to establish whole-sequence convergence of S-FSPS under its
standing assumptions.

These observations motivate us to investigate whether the primal sequence generated by S-FSPS converges. More generally, we ask whether such convergence
can be derived for variable-smoothing, single-loop, full-splitting algorithms
for broader classes of structured optimization problems.
Nonconvergence of iterates can occur even in smooth convex optimization. Bolte
and Pauwels \cite{BoltePauwels2021} give examples in which classical algorithms
and associated dynamical systems fail to converge. Bolte, Combettes, and
Pauwels \cite{BolteCombettesPauwels2024} show that Frank--Wolfe iterates may
fail to converge over a compact convex set even when their objective values
converge to the optimum. Our setting is different: the nonconvergence arises
in a variable-smoothing full-splitting method for structured fractional
optimization.

We construct a slowly rotating spiral whose angular increments tend to zero
but have a divergent sum. The orbit has the unit circle as its cluster set and
has infinite length. A compatible first-order jet and a
\(C^{1,1}\) Whitney extension are then used to construct problem data for which
S-FSPS generates this orbit exactly. Both instances satisfy the assumptions of
the subsequential convergence theory, yet their primal sequences share the cluster
set \(\{1\}\times\mathbb S^1\) and do not converge, although every cluster
point is an exact limiting lifted stationary point.

\paragraph{Contributions}
The principal contributions are summarized below.
\begin{enumerate}[label=\textup{(\roman*)},leftmargin=2.3em]

\item \textbf{Sharpness of the S-FSPS convergence theory.}
We first refine the subsequential convergence analysis in
\cite[Theorem~4.3]{BotLiTao2025}. Specifically, the explicit bound
$
\operatorname{dist}\bigl(0,\partial g(Ax)\bigr)\leq \ell
\; (x\in\mathcal S)
$
already provides the uniform Moreau-envelope estimates required to establish
the existence of a subsequence converging to an exact limiting lifted
stationary point. Consequently, this subsequential conclusion remains valid
without the  condition
$
A(\mathcal S)\subseteq\operatorname{int}(\operatorname{dom}g).
$
We then show that this subsequence convergence result  cannot be strengthened to
whole-sequence convergence under the standing assumptions. To show this, we
construct an admissible instance whose primal sequence has a continuum of cluster points and an
infinite-length trajectory.

\item \textbf{Exact realization and robustness of the counterexample.}
We prescribe a slowly rotating spiral together with compatible function
values and first-order data, verify the Glaeser--Whitney compatibility
conditions globally on the union of the spiral and its limiting circle, and
invoke a \(C^{1,1}\) Whitney extension theorem
\cite{DaniilidisEtAl2018} to construct a globally defined smooth component
whose gradient has a controlled Lipschitz constant. We then
choose the remaining problem data so that the S-FSPS iteration reproduces the
prescribed orbit exactly. In the basic construction, the linear operator \(A\) is nonzero and
\(z^k\neq 0\) for every \(k\geq 0\).
 We further construct a
full-dimensional fractional instance for which \(A\) has full row rank,
while each of  \(f\circ K\), \(g\circ A\) and $g\circ A+h$ is nonconstant on the
feasible set. Thus, the failure of whole-sequence convergence cannot be
attributed to a lower-dimensional feasible set, degeneracy of the composite
operator, or constancy of the denominator \(f\circ K\), the composite term \(g\circ A\) or the numerator $g\circ A+h$.

\item \textbf{Implications for composite optimization.}
The first construction also yields a nonconvergent example for the corresponding
variable-smoothing, single-loop, full-splitting method applied to
$
\min_{x\in\mathcal S}\; g(Ax)+h(x).
$
The generated sequence still admits a subsequence converging to an exact
 stationary point, while the full sequence fails to converge.

\end{enumerate}

\paragraph{Organization}
The remainder of the paper is organized as follows.
Section~\ref{sec:algorithm} recalls S-FSPS, refines its subsequential
convergence theory, and states the whole-sequence convergence question.
Section~\ref{sec:whitney} presents the Whitney extension tool used in the
construction. Section~\ref{sec:construction} develops the basic and
full-dimensional fractional counterexamples. Section~\ref{sec:com}
treats the corresponding composite optimization method. Finally,
Section~\ref{sec:conclusion} concludes the paper and discusses the implications
for whole-sequence convergence.

\section{S-FSPS and the Whole-Sequence Convergence Question}
\label{sec:algorithm}
 The Euclidean norm is denoted by \(\|\cdot\|\), and
\(\langle\cdot,\cdot\rangle\) denotes the corresponding inner product.
Let $\overline \R:=\R\cup\{+\infty\}$.
For a function \(f:\mathbb{R}^n\to\overline{\mathbb{R}}\), its
\emph{effective domain} is defined by
$
\dom f:=\{x\in\mathbb{R}^n:f(x)<+\infty\},
$
and \(f\) is said to be \emph{proper} if \(\dom f\neq\emptyset\).
The \emph{(Fenchel) conjugate} of \(f\) is given by
$
f^*(v):=\sup_{x\in\mathbb{R}^n}
\{\langle v,x\rangle-f(x)\}.
$
The domain of the convex subdifferential is denoted by
$
\dom(\partial f):=\{x\in\mathbb{R}^n:\partial f(x)\neq\emptyset\}.
$
For a proper, convex, and lower semicontinuous function
\(f:\mathbb{R}^n\to\overline{\mathbb{R}}\), its \emph{proximal operator}
with parameter \(\gamma>0\) is defined by
$
\prox_{\gamma f}(x)
:=
\arg\min_{y\in\mathbb{R}^n}
\left\{
f(y)+\frac{1}{2\gamma}\|y-x\|^2
\right\}.
$
The corresponding \emph{Moreau envelope} is defined by
$
f_\gamma(x)
:=
\min_{y\in\mathbb{R}^n}
\left\{
f(y)+\frac{1}{2\gamma}\|y-x\|^2
\right\}.
$
For every \(x\in\mathbb{R}^n\),
$
f_\gamma(x)
=
\left(f^*+\frac{\gamma}{2}\|\cdot\|^2\right)^*(x),
$
and \(f_\gamma\) is continuously differentiable with
$
\nabla f_\gamma(x)
=
\frac{x-\prox_{\gamma f}(x)}{\gamma}
=
\prox_{f^*/\gamma}\left(\frac{x}{\gamma}\right).
$
Moreover,
$
\nabla f_\gamma(x)
\in
\partial f\bigl(\prox_{\gamma f}(x)\bigr).
$
We use
$
    \Sone:=\bigl\{x\in\mathbb R^2:\|x\|=1\bigr\}
    \;\text{and}\;
    \overline{\mathbb B}_{\mathbb R^2}
    :=\bigl\{x\in\mathbb R^2:\|x\|\leq1\bigr\}
$
to denote the unit circle and the closed unit ball in \(\mathbb R^2\),
respectively. When the  Euclidean space is clear from the context,
we  write \(\overline{\mathbb B}\) for the corresponding closed unit
ball centered at the origin.
For a nonempty closed convex set
\(\mathcal C\subseteq\mathbb{R}^n\), its \emph{indicator function} is defined by
\[
\iota_{\mathcal C}(x)
:=
\begin{cases}
0, & \text{if }x\in\mathcal C,\\
+\infty, & \text{otherwise}.
\end{cases}
\]
The \emph{normal cone} to \(\mathcal C\) at \(x\) is denoted by
$
N_{\mathcal C}(x):=\partial\iota_{\mathcal C}(x)
$, and
$\Proj_{\mathcal C}(\cdot)$ denotes the Euclidean projection onto $\mathcal C$.
For a sequence \(\{x^k\}_{k\ge0}\), we denote by
$
\operatorname{cluster}(x^k)
$
the set of its cluster points.
Given a linear operator $A: {\mathbb R}^n \rightarrow {\mathbb R}^m$, we denote by $A^*: {\mathbb R}^m \rightarrow {\mathbb R}^n$ its {\it adjoint operator}. We also use $\sigma_A:=\|A\|=\sup\{ \|A  x\|: \| x\|=1\}$ to denote its norm.
Given a vector \(x\in\mathbb{R}^n\), \(x_i\) denotes its \(i\)th component.


In this paper, we consider the structured fractional optimization problem
\begin{equation}\label{eq:fractional-problem}
    \min_{x\in\mathcal S}
    \frac{g(Ax)+h(x)}{f(Kx)},
\end{equation}
where \(\mathcal S\) is a nonempty, convex, and compact subset of
\(\mathbb R^n\);
\(f:\mathbb R^p\to\overline{\mathbb R}:=\mathbb R\cup\{+\infty\}\) and
\(g:\mathbb R^s\to\overline{\mathbb R}\) are proper, convex, and lower
semicontinuous;
\(A:\mathbb R^n\to\mathbb R^s\) and
\(K:\mathbb R^n\to\mathbb R^p\) are linear operators; and
\(h:\mathbb R^n\to\mathbb R\) is  differentiable on an open set containing ${\cal S}$  and its gradient $\nabla h$ is $L_{\nabla h}$-Lipschitz continuous over this  set.


We use the following  assumption of \cite[Assumption~3.1]{BotLiTao2025}.
\begin{assumption}\label{ass:standing}
The set \(\mathcal S\subseteq\mathbb R^n\) is nonempty, convex, and compact;
\(A\) and \(K\) are nonzero linear operators; \(g\) and \(f\) are proper,
convex, and lower semicontinuous functions; and \(h\) is differentiable with an
\(L_{\nabla h}\)-Lipschitz continuous gradient on an open neighborhood of
\(\mathcal S\). Moreover,
\begin{enumerate}[label=\textup{(\roman*)}]
    \item \(K(\mathcal S)\subseteq\operatorname{int}(\dom f)\) and
          \(f(Kx)>0\) for every \(x\in\mathcal S\);
    \item \(\mathcal S\cap A^{-1}(\dom g)\neq\varnothing\) and
          \(\inf_{x\in\mathcal S}\{g(Ax)+h(x)\}>0\);
    \item \(A(\mathcal S)\subseteq\dom(\partial g)\), and there exists
          \(\ell>0\) such that
          \(\dist(0,\partial g(Ax))\le\ell\) for every
          \(x\in\mathcal S\).
\end{enumerate}
\end{assumption}
\begin{remark}
The condition
$
\mathcal S\cap A^{-1}(\dom g)\!\neq\!\varnothing
$ in Assumption~\ref{ass:standing}(ii)
is redundant under Assumption~\ref{ass:standing}(iii). Since
$
A(\mathcal S)\subseteq\dom(\partial g)\subseteq\dom g,
$
we have
$
\mathcal S\subseteq A^{-1}(\dom g),
$
which immediately yields
$
\mathcal S\cap A^{-1}(\dom g)=\mathcal S\neq\varnothing.
$
Moreover, if \(g:\R^s\rightarrow\R\) is globally Lipschitz continuous,
Assumption~\ref{ass:standing}(iii) is automatically satisfied. Indeed, if
\(L:=\Lip(g)\), then
$\partial g(u)\subseteq L\,\overline{\mathbb B}$ for all $u\in\R^s$.
\end{remark}
We review the concept of limiting  lifted stationary point of (\ref{eq:fractional-problem}).

\begin{definition}\label{def:limiting-lifted-stationarity}
For problem~\eqref{eq:fractional-problem}, a point
\[
x\in\mathcal S\cap A^{-1}(\dom\partial g)
\cap K^{-1}(\dom\partial f)
\]
with \(f(Kx)>0\) is called a \emph{limiting lifted stationary point} if
\[
0\in
f(Kx)
\bigl(
A^*\partial g(Ax)+\nabla h(x)+\partial\iota_{\mathcal S}(x)
\bigr)
-
(g(Ax)+h(x))K^*\partial f(Kx).
\]
\end{definition}


\noindent Next, we define the smoothed potential function with $\gamma>0$,
\begin{equation}\label{eq:Psi}
\Psi(x,z;\gamma)
:=
\langle z,Ax\rangle-g^*(z)+h(x)+\iota_{\mathcal S}(x)
-\frac{\gamma}{2}\|z\|^2 .
\end{equation}


The following lemma shows that the explicit uniform subgradient bound in Assumption~\ref{ass:standing}(iii) provides all the local control needed in the proof of Theorem~\ref{PriTheo2R}.
\begin{lemma}
\label{lem:uniform-moreau-estimates}
Suppose that $g$ is proper, convex, and lower semicontinuous and that Assumption \ref{ass:standing}(iii) holds.
Then, for every $w\in A(\mathcal S)$ and every $\gamma>0$,
\begin{itemize}
\item[(i)]
$\|\nabla g_\gamma(w)\|\leq\ell$;
\label{eq:uniform-gradient-bound}
\item[(ii)]
$
\|\prox_{\gamma g}(w)-w\|\leq\gamma\ell$;
\label{eq:uniform-proximal-bound}
\item[(iii)]
$0\leq g(w)-g_\gamma(w)\leq\frac{\gamma\ell^2}{2}$.

\item[(iv)]
$|g(u)-g(v)|\leq\ell\|u-v\|
\;\;\text{for any }u,v\in A(\mathcal S).$
\end{itemize}
\end{lemma}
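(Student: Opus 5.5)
The plan is to derive everything from two standard facts about Moreau envelopes of a proper, convex, lower semicontinuous function. The first is the gradient formula $\nabla g_\gamma(w)=(w-\prox_{\gamma g}(w))/\gamma\in\partial g(\prox_{\gamma g}(w))$ recalled above. The second is the classical bound $\|\nabla g_\gamma(w)\|\le \dist(0,\partial g(w))$, valid for every $w\in\dom(\partial g)$. I would prove the second fact directly rather than cite it. Set $p:=\prox_{\gamma g}(w)$, so that $v:=\nabla g_\gamma(w)=(w-p)/\gamma\in\partial g(p)$, and take any $u\in\partial g(w)$, which exists because $w\in A(\mathcal S)\subseteq\dom(\partial g)$. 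Monotonicity of $\partial g$ gives $\langle u-v,w-p\rangle\ge0$. Substituting $w-p=\gamma v$ yields $\gamma\langle u-v,v\rangle\ge 0$, hence $\|v\|^2\le\langle u,v\rangle\le\|u\|\|v\|$, so $\|v\|\le\|u\|$. Taking the infimum over $u\in\partial g(w)$ and invoking Assumption~\ref{ass:standing}(iii) gives (i). The bound is attained because $\partial g(w)$ is closed and convex, but only the infimum is needed.

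Item (ii) then follows immediately from $w-\prox_{\gamma g}(w)=\gamma\nabla g_\gamma(w)$ together with (i).

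For (iii), the lower bound $g_\gamma\le g$ comes from taking $y=w$ in the definition of the envelope. For the upper bound, write
\[
g_\gamma(w)=g(p)+\tfrac{1}{2\gamma}\|p-w\|^2,
\]
and apply the subgradient inequality at $w$ with the least-norm element $u\in\partial g(w)$, for which $\|u\|\le\ell$. This gives $g(p)\ge g(w)+\langle u,p-w\rangle$, so
\[
g(w)-g_\gamma(w)\le \langle u,w-p\rangle-\tfrac{1}{2\gamma}\|w-p\|^2\le \sup_{t\ge 0}\Bigl(\ell t-\tfrac{t^2}{2\gamma}\Bigr)=\tfrac{\gamma\ell^2}{2}.
\]

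For (iv), I take $u,v\in A(\mathcal S)$ with subgradients $a\in\partial g(u)$ and $b\in\partial g(v)$ of norm at most $\ell$. Such subgradients exist because, as noted above, the infimum in $\dist(0,\partial g(\cdot))$ is attained on the closed set $\partial g(\cdot)$. The subgradient inequalities give $g(v)-g(u)\ge\langle a,v-u\rangle\ge-\ell\|u-v\|$, and symmetrically $g(u)-g(v)\ge-\ell\|u-v\|$. Combining the two yields $|g(u)-g(v)|\le\ell\|u-v\|$. Both values are finite because $A(\mathcal S)\subseteq\dom g$. This argument does not need $A(\mathcal S)$ to be convex, although it is convex, and it does not need the segment between $u$ and $v$ to lie in $\operatorname{int}\dom g$. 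That is the point of dropping the interiority condition.

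The only step with any subtlety is (i): it must hold at points of $A(\mathcal S)$ that may lie on the boundary of $\dom g$. The monotonicity argument handles this without interiority, so I expect no real obstacle.
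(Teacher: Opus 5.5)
Your proposal is correct and follows essentially the same route as the paper: monotonicity of $\partial g$ applied to $\nabla g_\gamma(w)\in\partial g(\prox_{\gamma g}(w))$ and a subgradient at $w$ for (i)--(ii), the subgradient inequality at $w$ with a minimal-norm element to bound $g(w)-g_\gamma(w)$ in (iii), and two-sided subgradient inequalities with subgradients of norm at most $\ell$ for (iv). The only cosmetic difference is that in (iii) you evaluate at the proximal point instead of taking the infimum over all displacements as the paper does; both give the same bound $\gamma\ell^2/2$.
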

The proof of Lemma \ref{lem:uniform-moreau-estimates} is deferred to Appendix \ref{appA}.

\noindent We next recall the S-FSPS algorithm proposed in  \cite[Algorithm~4.1]{BotLiTao2025}.
\begin{algorithm}[S-FSPS]\label{alg:sfsps}
Let $\{\gamma_k\}_{k\ge0}$ be positive and nonincreasing, with
\begin{eqnarray}\label{eq:gammak}
\gamma_k\to0,
\qquad
\sum_{k=0}^{\infty}\gamma_k=+\infty.
\end{eqnarray}
Choose $\chi>1$, $\theta_0>0$, and a positive upper bound $L_{\nabla h}$ on the Lipschitz constant of
$\nabla h$ over the prescribed neighborhood, and
\begin{equation}\label{eq:delta-general}
\delta_k:=\chi\left(L_{\nabla h}+\frac{\sigma_A^2}{\gamma_k}\right).
\end{equation}
Starting from $(x^0,z^0)$ with $x^0\in\mathcal S$, generate, for $k\ge0$,
\begin{align}
 y^{k+1}&\in\partial f(Kx^k),\label{eq:y-update}\\
 x^{k+1}&:=\Proj_{\mathcal S}\left(
 x^k+\frac{\theta_k}{\delta_k}K^*y^{k+1}
 -\frac{1}{\delta_k}\nabla h(x^k)
 -\frac{1}{\delta_k}A^*z^k\right),\label{eq:x-update}\\
 z^{k+1}&:=\prox_{g^*/\gamma_k}\left(\frac{Ax^{k+1}}{\gamma_k}\right),\label{eq:z-update}\\
 \theta_{k+1}&:=\frac{\Psi(x^{k+1},z^{k+1};\gamma_k)}{f(Kx^{k+1})}.
 \label{eq:theta-update}
\end{align}
\end{algorithm}
We next refine the convergence analysis of
Algorithm~\ref{alg:sfsps} presented in \cite{BotLiTao2025}.
Specifically, we show that conclusions~(vi)--(vii) of
\cite[Theorem~4.3]{BotLiTao2025} remain valid without the assumption
$
    A(\mathcal{S})\subseteq\operatorname{int}(\dom g).$
\begin{theorem}
\label{PriTheo2R}
Suppose Assumption \ref{ass:standing} holds. Define $V^k := (x^k,y^k,z^k), \; k\geq 1$. Let $\Omega$ be the set of  cluster points of the sequence $\{{ V}^k\}$ generated by Algorithm \ref{alg:sfsps}.  Then the following statements hold:
\begin{itemize}
\item[(i)] For every $k \geq 1$, the following inequality holds:
\begin{align*}\label{desineqRR}
& \ \Psi({ x}^{k+1}, { z}^{k+1}; \gamma_k)
+ \theta_k \left[f(K{ x}^k)-\left( \langle K{ x}^{k+1},{ y}^{k+1} \rangle - f^*({ y}^{k+1})\right)\right]\nn \\
\leq & \ \Psi({ x}^{k}, { z}^{k}; \gamma_{k-1})
-{\widetilde c}_{k}\|{ x}^k-{ x}^{k+1}\|^2 + \Xi^{k+1},
\end{align*}
 where
$
\Xi^{k+1} := \frac{\gamma_{k-1}-\gamma_{k}}{2}\|{ z}^{k+1}\|^2 \geq 0 \quad \mbox{and} \quad
 {\widetilde c}_{k}:=\frac{(\chi-1)}{2}\left(L_{\nabla h} +\frac{\sigma_A^2}{\gamma_{k}}\right) >0.
$

\item[(ii)] The sequence $\{{ V}^k\}$ is bounded.
\item[(iii)] There exists an index $K_1 \geq 1$ such that $\theta_k\ge 0$ for all $k\ge K_1$.
\item[(iv)] $\lim_{k\to+\infty}\theta_k=\overline{\theta}$ {for some $\overline{\theta} \ge 0$.}
\item[(v)] We have $\liminf\limits_{k\to+\infty}\delta_{k}\|{ x}^{k+1}-{ x}^k\|=0.$
\end{itemize}
\begin{itemize}
\item[(vi)]
{For every $(\overline{ x},{\overline{ y}},{\overline{ z}}) \in \Omega$, we have }
$\frac{g(A{\overline{ x}})+h({\overline{ x}})+\iota_{\cal S}(\overline{ x})}{f(K{\overline{ x}})}={\overline{ \theta}},$
{where $\overline{\theta}$ is given as in {(iv)}.}
\item[(vii)]
Let \(\{{ x}^{k_j}\}\) be a subsequence of $\{x^k\}_{k\ge 0}$ such that
$
\lim\limits_{j \to +\infty} \delta_{k_j} \|{ x}^{k_j+1} - { x}^{k_j}\| = 0$ (whose existence is guaranteed by {(v)}).  Then every cluster point \(\overline{ x}\) of this subsequence is a limiting lifted stationary point for the optimization problem \eqref{eq:fractional-problem}.
\end{itemize}
\end{theorem}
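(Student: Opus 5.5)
The plan follows the architecture of \cite[Theorem~4.3]{BotLiTao2025}. Each place where that proof used $A(\mathcal S)\subseteq\operatorname{int}(\dom g)$, namely local Lipschitz continuity of $g$ near $A(\mathcal S)$ and local boundedness of $\partial g$, we replace by the uniform estimates of Lemma~\ref{lem:uniform-moreau-estimates}.

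First, the one-step inequality (i). By \eqref{eq:z-update} and the Moreau identity, $z^{k+1}=\nabla g_{\gamma_k}(Ax^{k+1})$ maximizes $z\mapsto\langle z,Ax^{k+1}\rangle-g^*(z)-\frac{\gamma_k}{2}\|z\|^2$. Hence
\[
\Psi(x^{k+1},z^{k+1};\gamma_k)=g_{\gamma_k}(Ax^{k+1})+h(x^{k+1}).
\]
The $x$-update is a projected gradient step on $x\mapsto\langle z^k,Ax\rangle+h(x)-\theta_k\langle Kx,y^{k+1}\rangle$ with step $1/\delta_k$. The descent lemma for $h$ and the projection inequality give decrease $\delta_k-\frac{L_{\nabla h}}{2}$ times $\|x^{k+1}-x^k\|^2$. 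The coupling term $\langle z^k-z^{k+1},A(x^{k+1}-x^k)\rangle$ is handled through the $\gamma_{k-1}$-strong concavity in $z$ of $\Psi(x^{k+1},\cdot;\gamma_{k-1})$, whose maximizer over the previous parameter yields, by Young's inequality, a loss of at most $\frac{\sigma_A^2}{2\gamma_{k}}\|x^{k+1}-x^k\|^2$. Switching $\gamma_{k-1}$ to $\gamma_k$ in the quadratic term produces $\Xi^{k+1}$. The term $\theta_k[f(Kx^k)-(\langle Kx^{k+1},y^{k+1}\rangle-f^*(y^{k+1}))]$ arises from the Fenchel–Young equality at $y^{k+1}\in\partial f(Kx^k)$. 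This step is purely algebraic and does not involve interiority.

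Second, (ii)--(iv). Boundedness of $x^k$ follows from compactness of $\mathcal S$. $y^{k+1}$ is bounded because $K(\mathcal S)$ is a compact subset of $\operatorname{int}\dom f$. $z^{k+1}$ is bounded by Lemma~\ref{lem:uniform-moreau-estimates}(i), since $\|z^{k+1}\|=\|\nabla g_{\gamma_k}(Ax^{k+1})\|\le\ell$. This gives $\Xi^{k+1}\le\frac{\ell^2}{2}(\gamma_{k-1}-\gamma_k)$, which is summable by telescoping. Lemma~\ref{lem:uniform-moreau-estimates}(iii) gives
\[
\Psi(x^{k+1},z^{k+1};\gamma_k)\ge g(Ax^{k+1})+h(x^{k+1})-\gamma_k\ell^2/2,
\]
so Assumption~\ref{ass:standing}(ii) yields $\theta_k\ge0$ eventually, which is (iii). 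Combining (i) with the subgradient inequality $f(Kx^{k+1})\ge f(Kx^k)+\langle y^{k+1},K(x^{k+1}-x^k)\rangle$ shows
\[
\theta_{k+1}f(Kx^{k+1})\le\theta_kf(Kx^{k+1})-\widetilde c_k\|x^{k+1}-x^k\|^2+\Xi^{k+1}.
\]
Dividing by $f(Kx^{k+1})\ge\min_{\mathcal S}f\circ K>0$ shows that $\theta_k$ is quasi-decreasing with summable errors and bounded below, so it converges, which is (iv). The same inequality gives $\sum_k\widetilde c_k\|x^{k+1}-x^k\|^2<\infty$. Since $\widetilde c_k\sim\delta_k$ and $\delta_k^{-1}\sim\gamma_k/\sigma_A^2$ is nonsummable, $\sum_k\delta_k^{-1}(\delta_k\|x^{k+1}-x^k\|)^2<\infty$ forces the liminf in (v) to vanish.

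Third, (vi)--(vii), where interiority was previously used. For (vi), $\theta_{k+1}=(g_{\gamma_k}(Ax^{k+1})+h(x^{k+1}))/f(Kx^{k+1})$. Lemma~\ref{lem:uniform-moreau-estimates}(iii) replaces $g_{\gamma_k}$ by $g$ up to $O(\gamma_k)$, and Lemma~\ref{lem:uniform-moreau-estimates}(iv) gives continuity of $g$ along $A(\mathcal S)$ without interiority. Passing to the limit therefore gives the identity.

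For (vii), the optimality condition of the projection reads
\[
\delta_{k_j}(x^{k_j}-x^{k_j+1})+\theta_{k_j}K^*y^{k_j+1}-\nabla h(x^{k_j})-A^*z^{k_j}\in N_{\mathcal S}(x^{k_j+1}).
\]
Along the subsequence the first term tends to $0$, and $x^{k_j+1}\to\overline x$ as well. Extract convergent $y^{k_j+1}\to\overline y\in\partial f(K\overline x)$ by closedness of $\partial f$. For the $z$-component, use $z^{k_j}\in\partial g(\prox_{\gamma_{k_j-1}}g(Ax^{k_j}))$. By Lemma~\ref{lem:uniform-moreau-estimates}(ii), $\prox_{\gamma g}(Ax^{k_j})\to A\overline x$, and by (iv)--(iii) of the same lemma $g(\prox(\cdot))\to g(A\overline x)$. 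The closedness of the graph of $\partial g$ with attentive convergence then gives $\overline z\in\partial g(A\overline x)$.

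The main obstacle is this last step. It requires verifying function-value convergence at the prox points, not only at $Ax^{k_j}$, and handling the index shift between $z^{k_j}$ and $x^{k_j+1}$. The plan is to control it via $\|x^{k_j+1}-x^{k_j}\|\to0$ together with the $O(\gamma\ell)$ prox bound. Finally, $\theta_{k_j}\to\overline\theta=(g(A\overline x)+h(\overline x))/f(K\overline x)$ by (vi), and multiplying by $f(K\overline x)$ yields the inclusion of Definition~\ref{def:limiting-lifted-stationarity}.
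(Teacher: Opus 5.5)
Your proposal is correct and essentially matches the paper, which cites (i)--(v) from \cite{BotLiTao2025} and proves (vi)--(vii) exactly as you do, via Lemma~\ref{lem:uniform-moreau-estimates}, the relation $z^{k_j}=\nabla g_{\gamma_{k_j-1}}(Ax^{k_j})\in\partial g\bigl(\prox_{\gamma_{k_j-1}g}(Ax^{k_j})\bigr)$, and the prox bound $\|\prox_{\gamma g}(w)-w\|\le\gamma\ell$. The ``main obstacle'' you flag is not one: $z^{k_j}$ is tied to $x^{k_j}$ (not $x^{k_j+1}$) by the dual update at iteration $k_j-1$, and for convex lower semicontinuous $g$ the graph of $\partial g$ is closed outright, so no convergence of $g$ at the prox points is needed --- just as well, since Lemma~\ref{lem:uniform-moreau-estimates}(iv) only applies on $A(\mathcal S)$, where those prox points need not lie.
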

The proof of Theorem~\ref{PriTheo2R} is deferred to Appendix~\ref{appB}.

\medskip
\noindent
Theorem~\ref{PriTheo2R} guarantees the existence of a subsequence converging to an exact limiting lifted stationary point. However, it remains {\it open} whether the entire primal sequence generated by S-FSPS necessarily converges.
Section \ref{sec:construction} answers this question in the negative.

\section{Whitney Extension Criterion}\label{sec:whitney}
We first recall a Glaeser-Whitney \(C^{1,1}\) extension theorem \cite{DaniilidisEtAl2018} and then derive a practical extension criterion.
Let $S$ be a nonempty subset of a Hilbert space
$(\mathcal H,\langle\cdot,\cdot\rangle,\|\cdot\|)$ and assume
$\alpha:S\rightarrow \mathbb{R}$ and $v:S\rightarrow \mathcal H$ satisfy
the so-called Glaeser--Whitney conditions:
\begin{equation}\label{eq:GW}
\begin{aligned}
&K_1:=\sup_{\substack{s_1,s_2\in S\\s_1\neq s_2}}
\frac{\left|
\alpha(s_2)-\alpha(s_1)
-\langle v(s_1),s_2-s_1\rangle
\right|}
{\|s_1-s_2\|^2}
<+\infty,\\[2ex]
&K_2:=\sup_{\substack{s_1,s_2\in S\\s_1\neq s_2}}
\frac{\|v(s_1)-v(s_2)\|}
{\|s_1-s_2\|}
<+\infty .
\end{aligned}
\end{equation}
\begin{theorem}\label{lem:whitney}(
  \(C^{1,1}\)-Glaeser--Whitney almost-minimal extension;
  see \cite[Theorem~3.1]{DaniilidisEtAl2018}
)
Let $S$ be a nonempty subset of a Hilbert space $H$ and let
$(\alpha(s),v(s))_{s\in S}$ be a $1$-Taylor field on $S$ satisfying
(\ref{eq:GW}). Then, the function
\[
G(x)=F(x)-\frac{1}{2}\overline{\mu}\|x\|^2
\]
is an explicit $C^{1,1}$-extension of the $1$-Taylor field $(\alpha,v)$,
provided that $F$ is the convex extension of the $1$-Taylor field
$(\tilde{\alpha},\tilde v)$ where for all $s\in S$,
$
\tilde{\alpha}(s):=\alpha(s)+\frac{1}{2}\overline{\mu}\|s\|^2
$
and
$
\tilde v(s):=v(s)+\overline{\mu}s,
$
with
$
\overline{\mu}
=
2K_1+K_2+
\sqrt{(2K_1+K_2)^2+K_2^2},
$
where $K_1,K_2$ are given by (\ref{eq:GW}).
For \(s_1\ne s_2\), define
\begin{align}
A_{s_1s_2}
&:=
\frac{
2\bigl(\alpha(s_1)-\alpha(s_2)\bigr)
+\ip{v(s_1)+v(s_2)}{s_2-s_1}}
{\norm{s_1-s_2}^{2}},\label{Afor}\\
B_{s_1s_2}
&:=
\frac{\norm{v(s_1)-v(s_2)}}{\norm{s_1-s_2}},\label{Bfor}
\end{align}
and set
\[
\Gamma_1(S,(\alpha,v))
:=
\sup_{s_1\ne s_2}
\left(
\sqrt{A_{s_1s_2}^{\,2}+B_{s_1s_2}^{\,2}}
+|A_{s_1s_2}|
\right).
\]
Moreover, the extension $G$, which satisfies
$G(s)=\alpha(s)$ and $\nabla G(s)=v(s)$ for every $s\in S$,
 is almost minimal, i.e.
\[
\Gamma_1(S,(\alpha,v))
\leq
\Gamma_1(H,(G,\nabla G))
=
\operatorname{Lip}(\nabla G)
\leq
\frac{5+\sqrt{29}}{2}
\Gamma_1(S,(\alpha,v)).
\]
\end{theorem}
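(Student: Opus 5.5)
This statement is the Glaeser--Whitney $C^{1,1}$ extension theorem, quoted from \cite[Theorem~3.1]{DaniilidisEtAl2018}. The plan is to reproduce the structure of that proof, adapted to the notation above: convexify the jet, take a minimal convex extension, smooth it by a Moreau envelope, and then subtract the quadratic again.

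\textbf{Step 1 (convexified jet).} Set $\tilde\alpha(s)=\alpha(s)+\tfrac12\overline\mu\|s\|^2$ and $\tilde v(s)=v(s)+\overline\mu s$. Using $K_1,K_2$ from \eqref{eq:GW}, I will check that this jet satisfies the $C^{1,1}$-convex Glaeser conditions with some constant $M$:
\[
\tilde\alpha(s_2)\ge\tilde\alpha(s_1)+\ip{\tilde v(s_1)}{s_2-s_1}+\tfrac{1}{2M}\|\tilde v(s_2)-\tilde v(s_1)\|^2 .
\]
Expanding the left side minus the first two terms on the right gives $\tfrac12\overline\mu\|s_2-s_1\|^2$ plus the Taylor remainder. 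The remainder is at least $-K_1\|s_1-s_2\|^2$, and $\|\tilde v(s_2)-\tilde v(s_1)\|^2$ is controlled through $K_2$ and the cross term $\overline\mu\ip{v(s_2)-v(s_1)}{s_2-s_1}$.

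Requiring the inequality to hold for all admissible ratios reduces to a quadratic condition on $\overline\mu$ with $M\approx 2\overline\mu$. The stated value $\overline\mu=2K_1+K_2+\sqrt{(2K_1+K_2)^2+K_2^2}$ should be exactly the root of that quadratic. Doing this optimization cleanly is the main computational obstacle; I would first try writing everything in terms of $A_{s_1s_2}$ and $B_{s_1s_2}$ from \eqref{Afor}--\eqref{Bfor}.

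\textbf{Step 2 (convex extension).} Define
\[
F=\operatorname{conv}\Bigl(\min_{s\in S}\bigl[\tilde\alpha(s)+\ip{\tilde v(s)}{\cdot-s}+\tfrac M2\|\cdot-s\|^2\bigr]\Bigr),
\]
or equivalently the Moreau-type construction $\bigl(m^*+\tfrac{1}{2M}\|\cdot\|^2\bigr)^*$, where $m(x)=\sup_s\{\tilde\alpha(s)+\ip{\tilde v(s)}{x-s}\}$. The Glaeser inequality from Step 1 should give $m\le F\le$ (the upper quadratic envelope) and equality at points of $S$. This forces $F(s)=\tilde\alpha(s)$ and $\nabla F(s)=\tilde v(s)$. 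Moreover $F$ is convex with $M$-Lipschitz gradient, by the standard duality between $1/M$-strong convexity of the conjugate and $M$-smoothness.

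\textbf{Step 3 (undo the shift and compare constants).} Set $G=F-\tfrac12\overline\mu\|\cdot\|^2$. Then $G$ interpolates $(\alpha,v)$. Since $F$ is convex and $M$-smooth, $\nabla G$ has Lipschitz constant at most $\max(\overline\mu,\,M-\overline\mu)$, which is of order $\overline\mu$.

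For the lower bound $\Gamma_1(S,(\alpha,v))\le\operatorname{Lip}(\nabla G)$, apply the identity $\Gamma_1(H,(G,\nabla G))=\operatorname{Lip}(\nabla G)$, valid for any $C^{1,1}$ function. This comes from integrating $\nabla G$ along segments, which bounds $|A|$ and $B$ and yields the sharp combination $\sqrt{A^2+B^2}+|A|$. Restriction to $S$ then only decreases the supremum.

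For the upper bound, note that $K_1\le\tfrac12\sup|A|+\tfrac12\sup B$ by symmetrizing the remainder, and $K_2=\sup B$. Substituting these into $\overline\mu$ and into the Lipschitz estimate should produce the factor $\tfrac{5+\sqrt{29}}{2}$. Tracking this constant exactly, rather than up to an absolute factor, is the delicate part; if the direct bound comes out weaker, I would instead rely on \cite{DaniilidisEtAl2018} for the sharp constant, which is all the paper needs.
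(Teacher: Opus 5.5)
The paper gives no proof of this statement; it imports it verbatim from the cited work of Daniilidis et al. Your outline (add $\tfrac{\overline\mu}{2}\|\cdot\|^2$, take the Azagra--Mudarra convex extension $F=\operatorname{conv}(g)$, subtract the quadratic) is the argument behind that citation, and its overall structure is sound.

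Step~1 closes with $M=2\overline\mu$ exactly. Write $h=\|s_2-s_1\|$ and $\Delta v=v(s_2)-v(s_1)$, and use $R\ge -K_1h^2$, $\|\Delta v\|\le K_2h$ and $\langle \Delta v,s_2-s_1\rangle\le K_2h^2$. The convex Glaeser inequality then reduces to $\mu^2-2(2K_1+K_2)\mu-K_2^2\ge 0$, whose positive root is $\overline\mu$. Hence $\operatorname{Lip}(\nabla G)\le\max\{\overline\mu,M-\overline\mu\}=\overline\mu$. However, two of your justifications fail as written.

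\textbf{The ``equivalent'' formula.} $(m^*+\tfrac{1}{2M}\|\cdot\|^2)^*$ is not equivalent to $\operatorname{conv}(g)$, and it does not interpolate. It is the Moreau envelope $\inf_y\{m(y)+\tfrac{M}{2}\|\cdot-y\|^2\}\le m$. Since $m\le g\le \tilde\alpha(s)+\langle\tilde v(s),\cdot-s\rangle+\tfrac{M}{2}\|\cdot-s\|^2$, its value at $s\in S$ is at most $\tilde\alpha(s)-\tfrac{1}{4M}\|\tilde v(s)\|^2$. So your duality argument must be run on $F^*=g^*$, not on $m^*$. A direct computation gives
\[
g^*(\xi)=\tfrac{1}{2M}\|\xi\|^2+\sup_{s\in S}\Bigl\{\langle \xi,\,s-\tfrac{1}{M}\tilde v(s)\rangle-\tilde\alpha(s)+\tfrac{1}{2M}\|\tilde v(s)\|^2\Bigr\},
\]
which is proper (because $g\ge m$) and $\tfrac{1}{M}$-strongly convex, so $F=g^{**}$ is $M$-smooth. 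With this in place, your sandwich $m\le F\le g$ gives $F|_S=\tilde\alpha$ and $\nabla F|_S=\tilde v$; note that $m\le g$ is exactly the convex Glaeser inequality.

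\textbf{The identity $\Gamma_1(H,(G,\nabla G))=\operatorname{Lip}(\nabla G)$.} Write $L:=\operatorname{Lip}(\nabla G)$. The inequality $\Gamma_1(H,(G,\nabla G))\le L$ carries the left half of the chain, and it does not follow from integrating along segments. That only bounds $|A|\le L/2$ and $B\le L$ separately, which yields $\tfrac{1+\sqrt5}{2}L$. No argument using only the $L$-Lipschitz path $t\mapsto\nabla G(x+t(y-x))$ can do better. Take $\|y-x\|=1$, coordinates along and orthogonal to $y-x$, and the path $t\mapsto L\bigl(c\min\{t,1-t\},\sqrt{1-c^2}\,t\bigr)$ with $c=1/\sqrt3$; it gives the value $\tfrac{2}{\sqrt3}L$.

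What you need is the coupled estimate $B^2\le L^2-2L|A|$. It follows from the cocoercivity (Baillon--Haddad) inequality
\[
\phi(y)\ge\phi(x)+\langle\nabla\phi(x),y-x\rangle+\tfrac{1}{4L}\|\nabla\phi(y)-\nabla\phi(x)\|^2,
\]
applied to the convex, $2L$-smooth functions $\phi=\tfrac{L}{2}\|\cdot\|^2\pm G$. The two resulting inequalities are exactly $L^2\mp 2LA\ge B^2$, whence $\sqrt{A^2+B^2}\le L-|A|$. The reverse inequality is trivial from $\sqrt{A^2+B^2}+|A|\ge B$.

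\textbf{The constant.} You need not fall back on the reference here. Since $\sqrt{A^2+B^2}+|A|$ dominates both $2|A|$ and $B$, you get $K_2\le\Gamma_1$, and your symmetrization gives $K_1\le\tfrac{3}{4}\Gamma_1$. As $\overline\mu$ is increasing in $(K_1,K_2)$, this yields $\overline\mu\le\tfrac{5+\sqrt{29}}{2}\Gamma_1$ exactly.

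Your instinct to work with $A,B$ pays off further. With $M=2\mu$ the cross term cancels, and the convex Glaeser condition becomes exactly $\mu^2-2\mu A_{s_1s_2}-B_{s_1s_2}^2\ge0$. So any $\mu\ge\Gamma_1(S,(\alpha,v))$ works, and the construction can even be made minimal.

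Finally, handle the degenerate case $\overline\mu=0$ (where $M=0$) separately; there $(\alpha,v)$ is the jet of an affine function.
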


%

For the counterexample construction, we do not need an explicit formula for
the extension in Theorem~\ref{lem:whitney}. Instead, we shall invoke the
following  \(C^{1,1}\) Whitney extension criterion.

\begin{corollary}[A practical  \(C^{1,1}\) Whitney extension criterion]
\label{cor:practical-whitney}
Let \(E\subseteq\mathbb R^m\) be nonempty and let
\[
a:E\to\mathbb R,
\qquad
v:E\to\mathbb R^m.
\]
Suppose that there exists \(M>0\) such that, for every \(p,q\in E\),
\begin{align}
\|v(p)-v(q)\|
&\le M\|p-q\|,
\label{eq:whitney-gradient-condition}\\
\bigl|a(p)-a(q)-\langle v(q),p-q\rangle\bigr|
&\le M\|p-q\|^2.
\label{eq:whitney-value-condition}
\end{align}
Then there exists a function \(H\in C^{1,1}(\mathbb R^m)\) such that
\[
H|_E=a,
\qquad
\nabla H|_E=v,
\]
and
\[
\operatorname{Lip}(\nabla H)
\le C_{\mathrm W}M,
\]
where
\[
C_{\mathrm W}
:=
\frac{5+\sqrt{29}}{2}\bigl(3+\sqrt{10}\bigr)
<32.
\]
In particular, \(C_{\mathrm W}\) is a universal constant independent of
the dimension \(m\).
\end{corollary}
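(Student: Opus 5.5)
The plan is to obtain the corollary directly from Theorem~\ref{lem:whitney}, applied in the Hilbert space \(H=\mathbb R^m\) to the \(1\)-Taylor field \((\alpha,v):=(a,v)\) on \(S:=E\). Three things have to be checked: the Glaeser--Whitney conditions \eqref{eq:GW}, the interpolation property, and the bound on \(\Lip(\nabla H)\).

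\emph{Step 1: the conditions \eqref{eq:GW}.} Take \(s_1=q\) and \(s_2=p\) in \eqref{eq:whitney-value-condition}; this gives \(K_1\le M\). Condition \eqref{eq:whitney-gradient-condition} gives \(K_2\le M\) in the same way. So both suprema in \eqref{eq:GW} are finite, and \(\overline\mu\) is a well-defined finite number.

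\emph{Step 2: existence and interpolation.} Theorem~\ref{lem:whitney} now supplies an explicit \(G\in C^{1,1}(\mathbb R^m)\) with \(G|_E=a\) and \(\nabla G|_E=v\). Set \(H:=G\). It also gives the almost-minimality estimate
\[
\Lip(\nabla H)\le \tfrac{5+\sqrt{29}}{2}\,\Gamma_1\bigl(E,(a,v)\bigr).
\]

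\emph{Step 3: bounding \(\Gamma_1\) by a multiple of \(M\).} This is the only computational step, and I do not expect it to be a real obstacle. Fix \(s_1\neq s_2\) in \(E\) and put \(d:=\norm{s_1-s_2}\). The numerator of \(A_{s_1s_2}\) in \eqref{Afor} splits as
\[
\bigl[a(s_1)-a(s_2)-\ip{v(s_2)}{s_1-s_2}\bigr]-\bigl[a(s_2)-a(s_1)-\ip{v(s_1)}{s_2-s_1}\bigr].
\]
This identity should be verified by expanding both brackets. Each bracket is a first-order Taylor remainder, so each is bounded in absolute value by \(Md^2\) by \eqref{eq:whitney-value-condition}. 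Hence \(|A_{s_1s_2}|\le 2M\le 3M\). Condition \eqref{eq:whitney-gradient-condition} gives \(B_{s_1s_2}\le M\) from \eqref{Bfor}.

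Therefore
\[
\sqrt{A_{s_1s_2}^2+B_{s_1s_2}^2}+|A_{s_1s_2}|\le \sqrt{9M^2+M^2}+3M=(3+\sqrt{10})M.
\]
Taking the supremum over \(s_1\ne s_2\) gives \(\Gamma_1(E,(a,v))\le(3+\sqrt{10})M\). Combining this with Step 2 yields \(\Lip(\nabla H)\le C_{\mathrm W}M\).

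\emph{Step 4: the constant.} A numerical check gives \(C_{\mathrm W}\approx 5.19\times 6.16<32\). Neither Theorem~\ref{lem:whitney} nor any of the estimates above involves \(m\), so \(C_{\mathrm W}\) is independent of the dimension.

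The only point I would double-check is the sign bookkeeping in the decomposition of the numerator of \(A_{s_1s_2}\). Even a cruder triangle-inequality estimate gives \(|A_{s_1s_2}|\le 3M\), for instance by combining one remainder bound with the gradient Lipschitz bound. Since the stated constant already allows \(|A|\le 3M\), there is slack for whichever grouping one uses.
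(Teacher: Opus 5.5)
Your proof is correct and has the same structure as the paper's proof. You apply Theorem~\ref{lem:whitney} with $S=E$, note $B_{s_1s_2}\le M$, bound $|A_{s_1s_2}|$, and conclude $\Gamma_1(E,(a,v))\le(3+\sqrt{10})M$. Your explicit check that $K_1,K_2\le M$ fills in a point the paper leaves implicit.

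The one real difference is how you split the numerator of $A_{s_1s_2}$. The paper writes it as $2R_{p,q}-\ip{v(p)-v(q)}{p-q}$, one Taylor remainder plus a gradient-difference term, which gives $|A_{p,q}|\le 2M+M=3M$. You write it as the difference of the two oppositely oriented remainders. Your identity is correct: expanding gives exactly $2(a(s_1)-a(s_2))+\ip{v(s_1)+v(s_2)}{s_2-s_1}$.

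Your split is sharper. It gives $|A_{s_1s_2}|\le 2M$, hence $\Gamma_1(E,(a,v))\le(2+\sqrt5)M$, so the constant could be lowered from $C_{\mathrm W}$ to $\frac{5+\sqrt{29}}{2}(2+\sqrt5)\approx 22$. The paper's grouping needs the value estimate in only one orientation per pair, with the other absorbed by the gradient bound, as in Step~7 of the proof of Theorem~\ref{thm:main-counterexample}. Since the corollary assumes both orientations, that buys nothing here.

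One point of rigor: $C_{\mathrm W}\approx 31.998$, so the margin below $32$ is only about $0.002$. Your check $5.19\times 6.16<32$ rounds both factors down, which does not certify the inequality. Use upper bounds instead: for example, $\sqrt{29}<5.3852$ and $\sqrt{10}<3.1623$ give $C_{\mathrm W}<5.1926\times 6.1623<31.999$.
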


\begin{proof}
For distinct \(p,q\in E\), let \(A_{p,q}\) and \(B_{p,q}\) be defined as in
\eqref{Afor} and \eqref{Bfor}, respectively. By
(\ref{eq:whitney-gradient-condition}),
$
B_{p,q}\le M.
$
Define
\[
R_{p,q}
:=
a(p)-a(q)-\langle v(q),p-q\rangle.
\]
By \eqref{eq:whitney-value-condition},
$
|R_{p,q}|\le M\|p-q\|^2.
$
Moreover,
\begin{align*}
&2\bigl(a(p)-a(q)\bigr)
 +\langle v(p)+v(q),q-p\rangle\\
&\qquad
=2R_{p,q}-\langle v(p)-v(q),p-q\rangle.
\end{align*}
Consequently,
\[
|A_{p,q}|
\le
\frac{2|R_{p,q}|}{\|p-q\|^2}
+
\frac{\|v(p)-v(q)\|}{\|p-q\|}
\le 3M.
\]
Therefore,
\[
\Gamma_1(E,(a,v))
\le
\sqrt{(3M)^2+M^2}+3M
=
(3+\sqrt{10})M.
\]
The conclusion follows directly from Theorem~\ref{lem:whitney}.
\end{proof}

\section{Counterexamples}\label{sec:construction}
\subsection{Basic Construction}
\begin{theorem}\label{thm:main-counterexample}
There exists an instance of problem~\eqref{eq:fractional-problem}
satisfying Assumption~\ref{ass:standing} such that  ${\rank}(A)=1$ and $z^k\neq0$ for every $k\ge 0$; moreover, the S-FSPS iterates satisfy
\[
\clust(x^k)=\{1\}\times\Sph^1,
\qquad
\sum_{k=0}^{\infty}\norm{x^{k+1}-x^k}=+\infty.
\]
Moreover, every point in the cluster set is a limiting lifted stationary point.
\end{theorem}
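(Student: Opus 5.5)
The plan is to build the instance in $\mathbb R^3$ with $x=(x_1,w)$, $w\in\mathbb R^2$, and to fix every ingredient except $h$ so that the S-FSPS recursion becomes an explicit prescription for $\nabla h$ along a chosen orbit. Concretely, I would take
\[
\mathcal S=[0,2]\times 2\overline{\mathbb B}_{\mathbb R^2},\qquad Ax=Kx=x_1,\qquad g(u)=|u|,\qquad f(t)=t+c_f
\]
with $c_f>0$, so that $f(Kx)>0$ on $\mathcal S$ and $\operatorname{rank}(A)=1$. Assumption~\ref{ass:standing}(iii) then holds with $\ell=1$ by the preceding remark, and $y^{k+1}=1$ for all $k$. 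I would keep $x_1^k\equiv1$. Then $z^{k+1}=\nabla g_{\gamma_k}(1)=1$ as soon as $\gamma_k<1$; choosing $\gamma_0<1$ and $z^0=1$ gives $z^k=1\neq0$ for every $k$. Moreover $g^*(1)=0$, so $\theta_{k+1}=\bigl(1+h(x^{k+1})-\gamma_k/2\bigr)/(1+c_f)$.

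Next I prescribe the orbit $x^k=(1,r_k(\cos\varphi_k,\sin\varphi_k))$, which lies in the interior of $\mathcal S$, so the projection in \eqref{eq:x-update} is inactive. The parameters are chosen as follows.
\begin{itemize}
\item The smoothing schedule is $\gamma_k=\gamma_0/(k+1)$, which satisfies \eqref{eq:gammak}.
\item The angular increments are $\varphi_{k+1}-\varphi_k=\gamma_k/\log(k+2)$. These tend to zero with divergent sum, so the cluster set is $\{1\}\times\Sph^1$ and the length is infinite, once $r_k\uparrow1$.
\item The radii $r_k\uparrow1$ are taken slowly, roughly $1-r_k\asymp 1/\log(k+2)$, with $r_{k+1}-r_k=o(\gamma_k)$.
\end{itemize}
The update \eqref{eq:x-update} then holds exactly if and only if the first-order jet satisfies
\[
\nabla h(x^k)=\theta_k e_1-e_1-\delta_k(x^{k+1}-x^k).
\]
Since $\delta_k\|x^{k+1}-x^k\|\asymp 1/\log k\to0$, the $w$-part of this prescribed gradient vanishes in the limit. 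On the limiting circle $\{1\}\times\Sph^1$ I prescribe the constant value $h\equiv c_h$ and the gradient $(\bar\theta-1)e_1$, where $\bar\theta=(1+c_h)/(1+c_f)$.

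Along the orbit the values $a(x^k)=h(x^k)$ are defined recursively: $a(x^{k+1})=a(x^k)+\langle v(x^k),x^{k+1}-x^k\rangle+O(\|x^{k+1}-x^k\|^2)$. The resulting perturbations have total size $\sum\delta_k\|\Delta_k\|^2\lesssim\sum 1/(k\log^2k)<\infty$, so the values converge to $c_h$, which I fix as the limit. This makes $\theta_k\to\bar\theta$ consistently, and the $e_1$-component $\theta_k-1$ of the jet converges to $\bar\theta-1$. I need quantitative rates here, e.g.\ $|\theta_k-\bar\theta|\lesssim 1/\log k$.

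The main step, and the expected obstacle, is verifying \eqref{eq:whitney-gradient-condition}--\eqref{eq:whitney-value-condition} with a single $M$ on $E=\{x^k\}\cup(\{1\}\times\Sph^1)$, after which Corollary~\ref{cor:practical-whitney} yields $h\in C^{1,1}(\mathbb R^3)$ with the prescribed jet. The cases to check are these.
\begin{itemize}
\item Consecutive iterates: the values were defined via the jet, so this is built in.
\item Iterates on the same turn that are far apart in index: use telescoping along the arc together with the bound on the gradient variation.
\item Iterates on adjacent turns: these are spatially close but the gradient jump is only $O(1/\log k)$, so I need the radial gap between turns to dominate this jump. This is why $1-r_k$ and the turn spacing must be tuned against $\delta_k\|\Delta_k\|$.
\item Iterate--circle pairs: here distance $\ge 1-r_k$ must dominate both the gradient difference and the value gap $a(x^k)-c_h$, the latter measured relative to distance squared.
\end{itemize}
The value gap relative to distance squared is the tightest constraint. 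I would first try to enforce $|a(x^k)-c_h|\lesssim(1-r_k)^2$ by pinning $r_k$ to the tail sums of the value decrements. If that fails, I would add a radial gradient component proportional to $r_k-1$, corrected so that the recursion still holds, i.e.\ a radially convex well around the circle.

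Finally, the remaining conditions are direct checks.
\begin{itemize}
\item $\inf_{\mathcal S}(g(Ax)+h)>0$: take $c_h$ large and control $h$ on $\mathcal S$ via its Lipschitz gradient.
\item $L_{\nabla h}$: take it at least $C_{\mathrm W}M$.
\item $\delta_k$: pick $\chi$ so that the prescribed $\delta_k=\chi(L_{\nabla h}+\sigma_A^2/\gamma_k)$ is the one used above. This requires reparametrizing $\Delta_k$ with $\delta_k$ rather than $1/\gamma_k$, which changes nothing asymptotically.
\item Stationarity on the circle: at $\bar x=(1,u)$, $u\in\Sph^1$, we have $N_{\mathcal S}(\bar x)=\{0\}$, $\partial g(1)=\{1\}$ and $\partial f=\{1\}$. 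The inclusion of Definition~\ref{def:limiting-lifted-stationarity} reduces to $(1+c_f)\bigl((\bar\theta-1)e_1+e_1\bigr)=(1+c_h)e_1$, which holds by the definition of $\bar\theta$.
\end{itemize}
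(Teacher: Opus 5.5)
\paragraph{Verdict.} Your overall strategy is the paper's: prescribe a slowly rotating spiral, read the required gradient off the $x$-update, and extend the jet via Corollary~\ref{cor:practical-whitney}. However, the rates you commit to give a jet that \emph{no} $C^{1,1}$ function can realize. The step you flag as ``the main obstacle'' therefore genuinely fails; it does not merely require more work.

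\paragraph{The obstruction.} With $\varphi_{k+1}-\varphi_k=\gamma_k/\log(k+2)$ you get $\delta_\ell\|x^{\ell+1}-x^\ell\|^2\gtrsim \gamma_0/\bigl((\ell+1)\log^2(\ell+2)\bigr)$. Hence $\sum_{\ell\ge k}\delta_\ell\|x^{\ell+1}-x^\ell\|^2\gtrsim 1/\log k$. Now let $h$ be any function with $L$-Lipschitz gradient that realizes your orbit.
\begin{enumerate}[label=\textup{(\alph*)}]
\item The update forces $\nabla_w h(x^\ell)=-\delta_\ell(x^{\ell+1}-x^\ell)$, and $x^{\ell+1}-x^\ell\perp e_1$. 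The descent lemma then gives $h(x^k)-c_h\ge\sum_{\ell\ge k}(\delta_\ell-L/2)\|x^{\ell+1}-x^\ell\|^2$.
\item Since $\nabla_w h(x^\ell)\to0$, continuity gives $\nabla_w h=0$ on $\{1\}\times\Sph^1$, so $h\equiv c_h$ there. Taylor's bound at the nearest circle point then gives $|h(x^k)-c_h|\le\frac L2(1-r_k)^2\asymp 1/\log^2 k$.
\end{enumerate}
These two bounds are incompatible for large $k$, whatever $L$ is. In the language of Corollary~\ref{cor:practical-whitney}, the spiral--circle value condition fails: $a_k-c_h\asymp1/\log k$, while $\|x^k-q\|^2=(1-r_k)^2$ for the nearest $q$ and $\langle v(q),x^k-q\rangle=0$. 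The adjacent-turn value condition fails for the same reason: the drop over one turn is $\asymp1/\log k$, while the squared gap is $\asymp1/\log^2k$.

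\paragraph{The missing scaling.} The drop of $h$ along one turn is roughly $2\pi$ times the tangential gradient $\delta_k\|x^{k+1}-x^k\|$. This must be $O\bigl((1-r_k)^2\bigr)$, not $O(1-r_k)$. With angular increments of order $1/(k\log k)$, this forces $1-r_k\gtrsim(\log k)^{-1/2}$. That is exactly the paper's choice $d_k=(\log(k+K_0))^{-1/2}$, $\beta_k=\eta\alpha_kd_k^2$. It yields $\|v_k\|\le\widehat C_2\eta d_k^2$ and $a_k\le\widehat C_4\eta^2d_k^2$ (Lemma~\ref{lem:elementary}), so the spiral--circle conditions follow from $d_k\le\|p^k-q\|$.

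\paragraph{Your fallbacks.} Your first fallback, pinning $r_k$ to the tail sums, would lead to this scaling. But it contradicts your stated $1-r_k\asymp1/\log(k+2)$, and you do not carry it out. Your second fallback, a radial well, cannot help. Any extra radial gradient component just changes the radial step, and the obstruction above uses only $\|x^{\ell+1}-x^\ell\|\ge\beta_\ell/\pi$.

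\paragraph{Other missing ingredients.}
\begin{enumerate}[label=\textup{(\alph*)}]
\item For pairs on different turns you need a separation estimate like Lemma~\ref{lem:separation}. With the corrected scaling, $d$ shrinks by the factor $e^{-\pi/(2\eta)}$ per half-turn, so $\|p^i-p^j\|\ge(1-e^{-\pi/2})d_j$.
\item Your constants are circular. $M$ depends on $\delta_k$, which depends on $\chi$ and on $L_{\nabla h}\ge C_{\mathrm W}M$. The paper breaks the loop in three steps. It fixes $\chi=2$ and $L_{\nabla h}=1$ first, so $\delta_k=2(k+K_0+1)$ is known. It then proves the Whitney constant is $C\eta$ with $C$ independent of $\eta$ and $K_0$. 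Finally it takes $\eta\le1/(C_{\mathrm W}C)$.
\end{enumerate}

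\paragraph{A side remark.} Your nonconstant $f$ and interior orbit add a value-dependent $e_1$-component $\theta_k-1$ to the jet. This is manageable once the scaling is fixed. The paper avoids it entirely by taking $f\equiv1$, so $y^{k+1}=0$, and $\mathcal S=\{1\}\times\overline{\mathbb B}_{\R^2}$, so the projection absorbs the $e_1$-step.
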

\begin{proof}
\textbf{Step 1: problem data.}
Let $n=3$, $s=2$, $p=3$, and
\[
A=\begin{pmatrix}1&0&0\\0&0&0\end{pmatrix},\qquad
{\cal S}:=\{1\}\times \overline{\mathbb B}_{\R^2},\qquad K=I_3.
\]
Set
\[
g(y)=|y_1|+|y_2|,\qquad f(x)\equiv1.
\]
Then \(g^*=\iota_{[-1,1]^2}\) and
$
\prox_{g^*/\gamma}=\Proj_{[-1,1]^2}.$
Choose \(\chi=2\) and \(\gamma_k=1/(k+K_0)\). We shall construct \(h\) so that
\(\operatorname{Lip}(\nabla h)\le 1\). Since \(\sigma_A=1\), we may take \(1\)
as an upper bound for the Lipschitz constant of \(\nabla h\),
 not necessarily its
exact value. Consequently,
\[
\delta_k=2(k+K_0+1),
\qquad
\alpha_k:=\frac{1}{\delta_k}.
\]


Before stating the main construction, we briefly describe the
auxiliary sequences. The sequence \(\{d_k\}_{k\ge 0}\) measures the radial
distance of the prescribed iterates from the limiting unit circle and is chosen
to vanish as \(k\to\infty\). The sequence \(\{\beta_k\}_{k\ge 0}\) specifies the angular
increment between successive iterates; its nonsummability ensures that the
trajectory continues to rotate indefinitely. These two sequences determine the spiral
\(\{p^k\}_{k\ge 0}\). Given this orbit, \(\{v_k\}_{k\ge 0}\) is chosen as the gradient value required
for  reproducing \(p^{k+1}\) exactly from
\(p^k\). Finally, \(\{a_k\}_{k\ge 0}\) prescribes the corresponding function values so that
the pairs \(\{(a_k,v_k)\}_{k\ge 0}\) form a compatible first-order Whitney jet.
Throughout the construction, $C>0$ denotes a constant independent of $k$, $K_0$ and $\eta$, whose value may change from line to line.

\medskip\noindent
\textbf{Step 2: the spiral.}
Let $\eta\in(0,1/2]$ be specified later, and choose an integer
$K_0:=K_0(\eta)$ sufficiently large such that
\begin{equation}\label{eq:K0-conditions}
K_0>e^4,\qquad
d_0:=\frac{1}{\sqrt{\log K_0}}\leq\eta^2,
\qquad
\eta\alpha_0d_0^2\leq1.
\end{equation}
For $k\geq0$, define
\begin{eqnarray}\label{dkvark}
&&d_k:=\frac1{\sqrt{\log(k+K_0)}},\qquad r_k:=1-d_k,\nn\\
&&\varphi_0=0,\qquad \beta_k:=\varphi_{k+1}-\varphi_k=\eta\alpha_kd_k^2,\end{eqnarray}
and
\begin{eqnarray}\label{uk}
u(\varphi):=(\cos\varphi,\sin\varphi),\qquad p^k:=r_ku(\varphi_k),\qquad x^k:=(1,p^k).
\end{eqnarray}
We have
\begin{align}
 &d_k\le\eta^2\;\mbox{ for all}\; k\ge0,\label{adddk}\\
&\eta\alpha_kd_k^2\le 1\;\mbox{ for all}\; k\ge 0,\nn\\
 &\sum_{k=0}^\infty\beta_k=\frac\eta2\sum_{k=0}^\infty\frac1{(k+K_0+1)\log(k+K_0)}=\infty,
\;
\mbox{whereas}\; \beta_k\to0.
\end{align}
 Fix \(\vartheta\in[0,2\pi)\).  For every sufficiently large integer \(m\),
let \(k_m\) be the first index such that
\(\varphi_{k_m}\geq2\pi m+\vartheta\).  Then
\[
    0\leq\varphi_{k_m}-(2\pi m+\vartheta)
    \leq\beta_{k_m-1}\longrightarrow0,
\]
so \(p^{k_m}\to(\cos\vartheta,\sin\vartheta)\).  Since \(r_k\to1\), every
cluster point of \(\{p^k\}\) lies on \(\Sone\).  Hence
\[
    \operatorname{cluster}(p^k)=\Sone,
    \qquad
    \operatorname{cluster}(x^k)=\{1\}\times\Sone.
\]
 In addition, we prove the chord estimate stated in Lemma \ref{lem:chord-lower-bound}, i.e.,
\begin{equation}\label{eq:chord}
\norm{p^{k+1}-p^k}\ge\frac{\beta_k}{\pi}.
\end{equation}
Therefore $\sum_{k=0}^{+\infty}\norm{p^{k+1}-p^k}=+\infty$.

\medskip\noindent
\textbf{Step 3: prescribed jet and elementary estimates.}
Define
\begin{eqnarray}\label{vkak}
v_k:=\frac{p^k-p^{k+1}}{\alpha_k},\qquad a_k:=\sum_{\ell=k}^{\infty}\alpha_\ell\norm{v_\ell}^2.
\end{eqnarray}
Then
\[
p^{k+1}=p^k-\alpha_kv_k,\qquad a_{k+1}-a_k=-\alpha_k\norm{v_k}^2.
\]
By \eqref{eq:a-size} of Lemma~\ref{lem:elementary}, we have $a_k<+\infty$.
Set
\[
    C:=\max\{\widehat C_1,\widehat C_2,
             \widehat C_3,\widehat C_4\}.
\]
Then
\begin{eqnarray*}
&&0<d_k-d_{k+1}\leq C\alpha_kd_k^3,\label{eq:d-est}\\
&&\norm{v_k}\leq C\eta d_k^2,\label{eq:v-est}\\
&&\norm{v_{k+1}-v_k}\leq C\eta\norm{p^{k+1}-p^k},\label{eq:vdiff-est}\\
&&0\leq a_k\leq C\eta^2d_k^2.\label{eq:a-est}
\end{eqnarray*}

Let
\[
E:=\Sph^1\cup\{p^k:k\ge0\},
\]
and prescribe
\[
a(q)=0,\quad v(q)=0\quad(q\in\Sph^1),\qquad a(p^k)=a_k,\quad v(p^k)=v_k.
\]

\medskip\noindent
\textbf{Step 4: spiral--circle estimates.}
Fix \(k\geq0\) and \(q\in\Sone\), and set \(R=\norm{p^k-q}\).
Because \(\norm{p^k}=1-d_k\),
\[
   d_k= \dist(p^k,\Sone)\le R.
\]
Using \(d_k\leq1\), \(0<\eta\leq1\), and
Lemma~\ref{lem:elementary}, let
\[
{C_*}:=\max\{\widehat C_1,\widehat C_2,\widehat C_3,\widehat C_4\}.
\]
Then
\begin{align}
    \norm{v(p^k)-v(q)}
    &=
    \norm{v_k}
    \leq C_*\eta d_k^2
    \leq C_*\eta R,
    \label{eq:mixed-gradient}\\
    |a(p^k)-a(q)-\ip{v(q)}{p^k-q}|
    &=
    a_k
    \leq C_*\eta^2d_k^2
    \leq C_*\eta R^2,
    \label{eq:mixed-forward}\\
    |a(q)-a(p^k)-\ip{v(p^k)}{q-p^k}|
    &\leq
    a_k+\norm{v_k}R \nonumber\\
    &\leq
    C_*\eta^2d_k^2+C_*\eta d_k^2R \nonumber\\
    &\leq
    C_*\eta R^2+C_*\eta R^2
    \leq 2 C_*\eta R^2.
    \label{eq:mixed-reverse}
\end{align}
 The last inequalities follow from
\(0<\eta\leq1\), \(d_k\leq R\), and \(d_k\leq1\). Thus, the gradient condition \eqref{eq:whitney-gradient-condition} and both
orientations of the value condition \eqref{eq:whitney-value-condition} are
verified for every spiral--circle pair by taking $C\geq 2C_*.$

\medskip\noindent
\textbf{Step 5: local angular separation spiral--spiral estimates.}
Let $i>j$ and suppose that
\[
0\le \varphi_i-\varphi_j\le \pi.
\]
Define
\begin{eqnarray}\label{Lij}
L_{j,i}:=\sum_{\ell=j}^{i-1}\norm{p^{\ell+1}-p^\ell}.
\end{eqnarray}
Then, Proposition~\ref{prop:local-half-turn} gives
\begin{eqnarray}\label{add1}
\norm{v_i-v_j}\le \widehat C_5\eta\norm{p^i-p^j}.
\end{eqnarray}
Moreover, Proposition~\ref{prop:local-quadratic} yields
\begin{eqnarray}\label{add2}
\left|a_i-a_j-\ip{v_j}{p^i-p^j}\right|
\le
\widehat C_6\eta\norm{p^i-p^j}^2,
\end{eqnarray}
Enlarging $C$ further, take
$C\ge \max(\widehat C_5,\widehat C_6)$.

\medskip\noindent
\textbf{Step 6: large angular separation spiral--spiral estimates.}
If $i>j$ and $\varphi_i-\varphi_j\ge\pi$, then
Lemma~\ref{lem:separation} yields
\[
\norm{p^i-p^j}
\ge
c\max\{d_i,d_j\},
\qquad
c:=1-e^{-\pi/2}>0.
\]
Next, we prove the following estimates.
\begin{eqnarray}\label{eq:different-gradient}
\norm{v_i-v_j}
\le
C\eta\norm{p^i-p^j},
\end{eqnarray}
and
\begin{eqnarray}\label{eq:different-value}
\left|a_i-a_j-\ip{v_j}{p^i-p^j}\right|
\le
C\eta\norm{p^i-p^j}^2.
\end{eqnarray}
First, let
\[
R:=\norm{p^i-p^j},
\qquad
D:=\max\{d_i,d_j\}=d_j.
\]
By Lemma~\ref{lem:separation},
$D\le \frac{R}{c}.$
Since $D\le1$, Lemma~\ref{lem:elementary} gives
\begin{align*}
&\norm{v_i-v_j}
\le
\norm{v_i}+\norm{v_j}
\le
2{\widehat C_2}\eta D^2
\le
2{\widehat C_2}\eta R/c,
\\
&\left|a_i-a_j-\ip{v_j}{p^i-p^j}\right|
\le
a_i+a_j+\norm{v_j}R
\notag\\
&\le
2\widehat C_4\eta^2D^2+\widehat C_2\eta D^2R
\notag\le
\left(\frac{2\widehat C_4}{c^2}+\frac{\widehat C_2}{c}\right)\,\eta R^2,
\end{align*}
where we used \(0<\eta\leq1\). Enlarge $C$ further so that
\[
    C\geq
    \max\left\{
        \frac{2\widehat C_2}{c},
        \frac{2\widehat C_4}{c^2}
        +\frac{\widehat C_2}{c}
    \right\}.
\]
Then \eqref{eq:different-gradient} and \eqref{eq:different-value} hold.

\medskip\noindent
\textbf{Step 7: Whitney extension and exact realization.}
The circle--circle estimates are trivial because \(a=0\) and \(v=0\) on
\(\Sone\).  Equations
\eqref{eq:mixed-gradient}--\eqref{eq:mixed-reverse} treat all mixed pairs.
Equations \eqref{add1}--\eqref{add2} treat
spiral--spiral pairs with angular separation at most \(\pi\), while
\eqref{eq:different-gradient}--\eqref{eq:different-value} treat the remaining
pairs.
The displayed value estimates were written with the earlier spiral point as
the base point.  The reverse orientation follows from
\begin{eqnarray}
    &&a_j-a_i-\ip{v_i}{p^j-p^i}\nonumber\\
    &&=
    -\left(a_i-a_j-\ip{v_j}{p^i-p^j}\right)
    +\ip{v_i-v_j}{p^i-p^j}.
\end{eqnarray}
Together with the corresponding gradient estimates, see
\eqref{eq:mixed-gradient}, \eqref{eq:different-gradient}, and
\eqref{add1},  the preceding estimates show that, after enlarging
\(C>0\) if necessary, the jet \((a,v)\) satisfies
\[
    \|v(p)-v(q)\|\le C\eta\|p-q\|,
\]
and
\[
    |a(p)-a(q)-\langle v(q),p-q\rangle|
    \le C\eta\|p-q\|^2
\]
for all \(p,q\in E\).
Therefore, Corollary~\ref{cor:practical-whitney} yields that there exists
\(\widetilde h_0\in C^{1,1}(\mathbb R^2)\) such that
\[
\widetilde h_0(p^k)=a_k,
\qquad
\nabla\widetilde h_0(p^k)=v_k,
\]
and
\[
\widetilde h_0|_{\mathbb S^1}=0,
\qquad
\nabla\widetilde h_0|_{\mathbb S^1}=0,
\qquad
\operatorname{Lip}(\nabla\widetilde h_0)
\le C_{\mathrm W}C\eta.
\]
The preceding estimates yield a constant \(C>0\), independent of
\(\eta\), \(K_0\), and \(k\), such that the two Whitney bounds above
hold for all \(p,q\in E\). Choose this final constant \(C\) large
enough to dominate all the constants appearing in Steps~3--7.
We then choose
\[
    0<\eta\leq
    \min\left\{\frac12,\frac{1}{C_WC}\right\}.
\] 
Here, $C\ge2\max\{2 C_*,\widehat C_5,\widehat C_6,\frac{2\widehat C_2}{c},
\frac{2\widehat C_4}{c^2}+\frac{\widehat C_2}{c} \}$.
The initial choice of \(K_0=K_0(\eta)\) can now be made sufficiently large so that all the preceding elementary estimates and condition (\ref{eq:K0-conditions}) hold.
Consequently,
\[
\operatorname{Lip}(\nabla\widetilde h_0)\le1.
\]

Add a constant $c_0$ so that $\widetilde h:=\widetilde h_0+c_0\ge1$ on $\overline{\mathbb B}_{\R^2}$, and define
\[
h(x_1,x_2,x_3):=\widetilde h(x_2,x_3).
\]
Initialize $x^0=(1,p^0)$, $z^0=(1,0)$, and $\theta_0>0$. Since $Ax=(1,0)$ for all $x\in\cal S$,
\[
z^{k+1}=\Proj_{[-1,1]^2}\left(\frac1{\gamma_k},0\right)=(1,0).
\]
Together with the initialization $z^0=(1,0)$, this yields  $z^k=(1,0)\neq0$ for all $k\ge0$. Also
\[
A^*z^k=(1,0,0),\qquad \nabla h(x^k)=(0,v_k),\ f\equiv 1,\; y^{k+1}=0.
\]
Therefore, the term  $\frac{\theta_k}{\delta_k}K^* y^{k+1}$ vanishes.
\[
x^k-\frac1{\delta_k}\nabla h(x^k)-\frac1{\delta_k}A^*z^k=(1-\alpha_k,p^{k+1}).
\]
Since $\Proj_{\cal S}(t,p)=(1,\Proj_{\overline{\mathbb B}_{\R^2}}p)$ and $p^{k+1}\in \overline{\mathbb B}_{\R^2}$, the $x$-update gives
\[
x^{k+1}=(1,p^{k+1}).
\]
Thus the prescribed spiral is exactly the primal S-FSPS orbit.

It remains to  verify Assumption \ref{ass:standing}.
\[
A({\cal S})=\{(1,0)\}\subseteq\dom\partial g,\qquad \partial g(1,0)=\{ 1 \}\times[-1,1],
\]
so $\dist(0,\partial g(Ax))=1$ for every $x\in{\cal S}$. Moreover,
\[
g(Ax)+h(x)=1+h(x)\ge2.
\]
Hence Assumption \ref{ass:standing} holds and the cluster-set and infinite-length conclusions now follow from Step 2.

We conclude by verifying that every point in the cluster set is a limiting lifted stationary point.
Let $\overline x=(1,q)$ where $q\in\Sone$. Then,
\begin{eqnarray}\nabla h(\overline x)=0,\;\partial g(A\overline x)=\{1\}\times [-1,1].\end{eqnarray}
$\overline z=(1,0)\in \partial g(A\overline x)$.
Since $(-1,0,0)\in N_{\cal S}(\overline x)$, we have
\begin{eqnarray*}0\in A^*\partial g(A{\overline x})+\nabla h(\overline x)+N_{\cal S}(\overline x).\end{eqnarray*}
Because $f\equiv 1$, this implies that $\overline x$ is  a limiting lifted stationary point.
\end{proof}


\begin{remark}
The counterexample does not contradict the subsequential convergence theorem in
\cite{BotLiTao2025}. It shows that different subsequences may converge to
different stationary cluster points. Moreover, the constructed instance satisfies
\[
    \gamma_k\downarrow0,\qquad
    \sum_{k=0}^{\infty}\gamma_k=+\infty,
    \qquad
    \sum_{k=0}^{\infty}\gamma_k^2<+\infty.
\]
Thus, even square summability of the smoothing parameters does not
ensure whole-sequence convergence.
\end{remark}
\subsection{Full-Dimensional Fractional Construction}
We next construct a full-dimensional counterexample for the fractional model satisfying several natural nondegeneracy properties.

\begin{theorem}\label{thm:full-dimensional-counterexample}
There exists an instance of problem~\eqref{eq:fractional-problem}
satisfying Assumption~\ref{ass:standing} such that
\(\operatorname{int}\mathcal S\neq\varnothing\), \(A\) has full row
rank, and each of \(f\circ K\), \(g\circ A\), and \(g\circ A+h\) is
nonconstant on \(\mathcal S\). Moreover, the S-FSPS iterates satisfy
\[
    \operatorname{cluster}(x^k)=\{1\}\times\mathbb S^1,
    \qquad
    \sum_{k=0}^{\infty}\|x^{k+1}-x^k\|=+\infty.
\]
Furthermore, every point in the cluster set is a limiting lifted stationary point.
Consequently, since the cluster set contains infinitely many points,
the primal sequence does not converge.
\end{theorem}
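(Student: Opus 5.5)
The plan is to reuse the spiral $\{p^k\}$ and the function $\widetilde h$ from the proof of Theorem~\ref{thm:main-counterexample} without change. I will keep $\sigma_A=1$ and the bound $L_{\nabla h}=1$, so that $\delta_k$ and $\alpha_k$ are the same as before. Only the fractional data around $\widetilde h$ will change. Concretely, take $n=3$, $s=p=1$,
\[
\mathcal S:=[1,2]\times\overline{\mathbb B}_{\R^2},\qquad A=K=(1\ \ 0\ \ 0),\qquad g(t)=|t|,\qquad f(t)=c+t,
\]
with a constant $c$ to be fixed (I expect $c=10$ to work), and $h(x):=\widetilde h(x_2,x_3)$. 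Then $\operatorname{int}\mathcal S\neq\varnothing$, and $A$ has full row rank $1$ with $\sigma_A=1$. On $\mathcal S$ we have $g(Ax)=x_1$ and $f(Kx)=c+x_1$, both nonconstant, and $g(Ax)+h(x)=x_1+\widetilde h(x_2,x_3)$ is nonconstant as well. Assumption~\ref{ass:standing} is immediate: $f(Kx)\ge c+1>0$ and $\dom f=\R$; the numerator is at least $2$; and $\partial g(x_1)=\{1\}$ for $x_1\in[1,2]$, so $\ell=1$.

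First I need explicit bounds on $\widetilde h$. Since $\widetilde h_0$ and $\nabla\widetilde h_0$ vanish on $\Sone$ and $\operatorname{Lip}(\nabla\widetilde h_0)\le1$, the descent lemma at the nearest circle point gives $|\widetilde h_0(p)|\le\frac12\dist(p,\Sone)^2\le\frac12$ on $\overline{\mathbb B}_{\R^2}$. Choosing $c_0=3/2$ therefore gives $\widetilde h\in[1,2]$ on the ball.

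Next comes the iteration. With $z^0=1$, the dual update gives $z^{k+1}=\Proj_{[-1,1]}(x_1^{k+1}/\gamma_k)=1$ whenever $x^{k+1}_1\ge1$, so $A^*z^k=(1,0,0)$. Also $y^{k+1}=f'(x_1^k)=1$ and $K^*y^{k+1}=(1,0,0)$. The point inside the projection is therefore
\[
\bigl(1-\alpha_k(1-\theta_k),\;p^k-\alpha_kv_k\bigr)=\bigl(1-\alpha_k(1-\theta_k),\;p^{k+1}\bigr).
\]
The projection onto the product set $\mathcal S$ acts componentwise. So $x^{k+1}=(1,p^{k+1})$ follows by induction, provided $0<\theta_k\le1$ for all $k$.

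The main point to check is this bound on $\theta_k$, which is where the choice of $c$ matters. I choose $\theta_0\in(0,1]$. For $k\ge0$, by \eqref{eq:Psi},
\[
\theta_{k+1}=\frac{1+\widetilde h(p^{k+1})-\gamma_k/2}{c+1}.
\]
Using $\widetilde h\in[1,2]$ and $\gamma_k<1$, this lies in $(0,1)$ once $c\ge2$. With this in place the primal orbit is exactly $(1,p^k)$, and the cluster-set and infinite-length conclusions follow from Step~2 of the proof of Theorem~\ref{thm:main-counterexample}.

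Finally I verify stationarity at $\overline x=(1,q)$ with $q\in\Sone$. Here $\nabla h(\overline x)=0$, $\partial g(1)=\{1\}$, $\partial f(1)=\{1\}$ and $\widetilde h(q)=c_0$. The normal cone satisfies $N_{\mathcal S}(\overline x)\ni(-t,0,0)$ for every $t\ge0$. The inclusion of Definition~\ref{def:limiting-lifted-stationarity} thus reduces to finding $t\ge0$ with
\[
(c+1)(1-t)-(1+c_0)=0,
\]
that is, $t=(c-c_0)/(c+1)\ge0$, which holds since $c\ge c_0$. Nonconvergence then follows because the cluster set $\{1\}\times\Sone$ is infinite.
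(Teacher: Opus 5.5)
Your proposal is correct. It follows the same skeleton as the paper: reuse the spiral and the Whitney-extended $\widetilde h$, pin the first coordinate on the face $x_1=1$ of $[1,2]\times\overline{\mathbb B}_{\R^2}$ through the projection, and keep the dual iterate constant. The fractional data, however, are instantiated differently, and this changes what has to be checked.

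\textbf{The paper's instance.} The paper takes the decreasing $f(s)=3-s$, so $y^{k+1}=-1$ and the term $\theta_kK^*y^{k+1}/\delta_k$ itself pushes $x_1$ against the face. It uses a $2\times 3$ operator $A$ of rank $2$ with $g(Ax)=\langle b,x\rangle$ on $\mathcal S$, and subtracts $\langle b,x\rangle$ in $h$ so that $A^*z^k=b$ is cancelled exactly. Consequently only $\theta_k>0$ has to be propagated. This holds for every $\theta_0>0$, follows from $\widehat h\ge 3$, and works for any admissible $c_0$. The price is that the numerator reduces to $\widehat h(x_2,x_3)$, so its nonconstancy must be argued through $a_k>0$.

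\textbf{Your instance.} Here $f$ is increasing, so $\theta_kK^*y^{k+1}$ pushes $x_1$ into the interior. The pinning therefore relies on $A^*z^k=e_1$ dominating that term. This is why you need the two-sided bound $0<\theta_k\le 1$, which in turn requires three things:
\begin{itemize}
\item an explicit upper bound on $\widetilde h$, supplied by your descent-lemma estimate $|\widetilde h_0|\le\frac12\dist(\cdot,\Sone)^2$ from the circle together with $c_0=3/2$;
\item the calibration $c\ge 2$;
\item the restriction $\theta_0\le 1$, plus $c\ge c_0$ so that the normal-cone multiplier at the cluster points is nonnegative.
\end{itemize}
All of these checks go through. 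In exchange, nonconstancy of $g\circ A+h=x_1+\widetilde h(x_2,x_3)$ is immediate.

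\textbf{A caveat on full row rank.} Your $A=(1\ 0\ 0)$ has full row rank only because $s=1$. It is essentially the rank-one operator of Theorem~\ref{thm:main-counterexample} with the zero row deleted. So it meets the letter of the statement, but the paper's rank-$2$ operator onto $\R^2$ gives a less trivial instance of the nondegeneracy the theorem is meant to illustrate.
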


%

\begin{proof}
Let
\[
\mathcal S:=[1,2]\times \overline{\mathbb B}_{\R^2},
\qquad
g(u):=\|u\|_1 \quad (u\in\mathbb R^2),
\qquad
f(s):=3-s \quad (s\in\mathbb R),
\]
and define
\[
A=\frac13
\begin{pmatrix}
2&1&0\\
2&0&1
\end{pmatrix},
\qquad
K=
\begin{pmatrix}
1&0&0
\end{pmatrix},
\qquad
b=
\begin{pmatrix}
4/3\\
1/3\\
1/3
\end{pmatrix}.
\]
Then
$\sigma_A=1$ and $\operatorname{rank}(A)=2.$
Moreover, for every \(x\in\mathcal S\),
$
Ax\geq
\begin{pmatrix}
1/3\\
1/3
\end{pmatrix},$
$$g(Ax)=\|Ax\|_1=\frac{4}{3}x_1+\frac{1}{3}x_2+\frac{1}{3}x_3=\langle b,x\rangle.$$

\noindent Choose \(\chi=2\) and set
$
\gamma_k:=\frac{1}{k+K_0}.
$
We shall construct a function \(h\) satisfying
\[
\operatorname{Lip}(\nabla h)\leq 1.
\]
Accordingly, we set
$
\delta_k:=2(k+K_0+1)$ where $K_0\ge e^4>4$
and
$\alpha_k:=\frac{1}{\delta_k}.$
Choose $\eta>0$ and $K_0=K_0(\eta)$ exactly as in Step~7 of the
proof of Theorem~\ref{thm:main-counterexample}; in particular,
\[
    0<\eta\leq
    \min\left\{\frac12,\frac{1}{C_WC}\right\},
\]
and $K_0$ is sufficiently large so that \eqref{eq:K0-conditions}
holds.
Then, define \(d_k\), \(r_k\), \(\beta_k\), \(p^k\),
and \(x^k\) as in \eqref{dkvark} and \eqref{uk}, respectively.
Moreover, let \(a_k\) and \(v_k\) be defined as in \eqref{vkak}.

Following Steps 2--6 and the Whitney-extension part of Step 7 in the
proof of Theorem 4.1, and applying Corollary 3.2, we obtain
\(h_0\in C^{1,1}(\mathbb R^2)\) such that
\[
h_0(p^k)=a_k,
\qquad
\nabla h_0(p^k)=v_k,
\]
and
$h_0|_{\Sph^1}=0$,
$
\nabla h_0|_{\Sph^1}=0$,
$\operatorname{Lip}(\nabla h_0)\leq 1.
$
Choose $c_0>0$ sufficiently large so that
\[
    \widehat h(u):=h_0(u)+c_0\geq 3
    \qquad\text{for all }u\in\mathbb B_{\mathbb R^2}.
\]
Define
\[
    h(x_1,x_2,x_3)
    :=\widehat h(x_2,x_3)-\langle b,x\rangle.
\]
Thus, $\operatorname{Lip}(\nabla h)\leq 1$.
Initialize $x^0=(1,p^0)$, $z^0=(1,1)$, and $\theta_0>0$. Since $\gamma_k<1/3$ and $Ax^{k+1}\ge(1/3,1/3)$, it follows that
$z^{k+1}=\Proj_{[-1,1]^2}\left(\frac{A x^{k+1}}{\gamma_k}\right)=(1,1).$
Thus, $z^k=(1,1)\neq0$ for all $k\ge 0$. Also
\[
A^*z^k=b,\qquad \nabla h(x^k)=(0,v_k)-b,\ y^{k+1}=-1.
\]
Therefore,
\[
x^k+\frac{\theta_k}{\delta_k}K^* y^{k+1}-\frac1{\delta_k}\nabla h(x^k)-\frac1{\delta_k}A^*z^k=(1-\alpha_k\theta_k,p^{k+1}).
\]

We prove by induction that, for every $k\ge0$, the following two statements hold:
\begin{itemize}
\item[$\large{\textcircled{\small{1}}}_{k}$] $x^k=(1,p^k)$;
\item[$\large{\textcircled{\small{2}}}_{k}$] $\theta_k>0$.
\end{itemize}
The base case follows from the initialization $x^0=(1,p^0)$ and $\theta_0>0$.
Suppose that $\large{\textcircled{\small{1}}}_{k}$ and $\large{\textcircled{\small{2}}}_{k}$ hold.
Note that $x^{k+1}=(\Proj_{[1,2]}(1-\alpha_k\theta_k),\Proj_{\overline{\mathbb B}_{\R^2}}(p^{k+1}))=(1,p^{k+1})$
because $\theta_k>0$ and $p^{k+1}\in\overline{\mathbb B}_{\R^2}$.
Thus, $\large{\textcircled{\small{1}}}_{k+1}$ holds.

Moreover, since
\(g(Ax)=\langle b,x\rangle\) and
\(h(x)=\widehat h(x_2,x_3)-\langle b,x\rangle\), we have
$
\Psi(x^{k+1},z^{k+1};\gamma_k)
=\widehat h(p^{k+1})-\gamma_k>0,
\;
f(Kx^{k+1})=2.$ Hence,
$
\theta_{k+1}
=\frac{\widehat h(p^{k+1})-\gamma_k}{2}>0,
$
so \(\textcircled{\small 2}_{k+1}\) holds.
Thus the prescribed spiral is exactly the primal S-FSPS orbit.

It remains to  verify Assumption \ref{ass:standing}.
   $ K(\mathcal S)=[1,2]
    \subseteq \operatorname{int}(\operatorname{dom}f),$
    $f(Kx)=3-x_1\in[1,2],$
$
    \operatorname{rank}(A)=2,
    \;
    \|A\|=1,$
$A(\mathcal S)\subseteq
\{u\in\mathbb R^2:u_1>0,\ u_2>0\}.$
   For any $x\in{\cal S}$,
    $\partial g(Ax)=\{(1,1)^\top\}$.
Hence, $\dist(0,\partial g(Ax))=\sqrt{2}$ for every $x\in{\cal S}$. Moreover,
\[
g(Ax)+h(x)=\widehat h(x_2,x_3)\ge 3.
\]
Hence, all the standing assumptions are satisfied.

We conclude by verifying that every point in the cluster set is a limiting lifted stationary point.
Let \(\overline x=(1,q)\), where \(q\in\mathbb S^1\), and set
\(e_1:=(1,0,0)^\top\).

Since
$
\partial g(A\overline x)=\{(1,1)^\top\},\;
A^*(1,1)^\top=b,\;
\nabla h(\overline x)=-b,
$
we have
$
A^*(1,1)^\top+\nabla h(\overline x)=0.
$
Moreover,
$
f(K\overline x)=2,\;
g(A\overline x)+h(\overline x)=c_0,\;
K^*\partial f(K\overline x)=\{-e_1\}.
$
Because \(\overline x_1=1\), we have
$
-\frac{c_0}{2}e_1\in N_{\mathcal S}(\overline x).
$
Consequently,
$$
2\left(
A^*(1,1)^\top+\nabla h(\overline x)-\frac{c_0}{2}e_1
\right)
-c_0(-e_1)=0.
$$
Therefore,
$
0\in
f(K\overline x)\bigl(
A^*\partial g(A\overline x)+\nabla h(\overline x)+N_{\mathcal S}(\overline x)
\bigr)
-\bigl(g(A\overline x)+h(\overline x)\bigr)K^*\partial f(K\overline x),
$
so \(\overline x\) is a limiting lifted stationary point.
Moreover, the numerator is nonconstant on $\mathcal S$. Indeed,
$g(Ax)+h(x)=\widehat h(x_2,x_3).$
For every $q\in\mathbb S^1$, we have $\widehat h(q)=c_0$, whereas
$
\widehat h(p^k)=c_0+a_k>c_0,
$
where we have used $a_k>0$ for $k\geq 0$. Indeed,
$
r_{k+1}\neq r_k
\;\Longrightarrow\;
p^{k+1}\neq p^k
\;\Longrightarrow\;
v_k\neq 0,
$
and hence $a_k>0$ by \eqref{vkak}. Therefore, $g\circ A +h $ is nonconstant on \(\mathcal S\).
Since \(f(Kx)=3-x_1\) and \(g(Ax)=\langle b,x\rangle\),
both \(f\circ K\) and \(g\circ A\) are nonconstant on \(\mathcal S\).
\end{proof}
\section{Nonconvex  Composite Optimization Specialization}\label{sec:com}


The same phenomenon  occurs for the  composite
problem.
\begin{equation}\label{eq:PP}
    \min_{x\in\mathcal S}\; g(Ax)+h(x),
\end{equation}
where $\mathcal S\subseteq\mathbb R^n$ is nonempty, convex and compact;
$g:\mathbb R^s\to\overline{\mathbb R}$ is proper, convex and lower
semicontinuous; $A:\mathbb R^n\to\mathbb R^s$ is linear; and
$h:\mathbb R^n\to\mathbb R$ is differentiable on an open set containing
$\mathcal S$, with a Lipschitz continuous gradient. Choose $\chi>1$, let $\{\gamma_k\}_{k\ge0}$ be positive and
nonincreasing and satisfy~\eqref{eq:gammak}, and define
$\delta_k$ by~\eqref{eq:delta-general}.
Starting from $x^0\in\mathcal S$ and $z^0\in\mathbb R^s$,
generate, for $k\geq 0$,
\begin{align}
x^{k+1}
&=
\Proj_{\mathcal S}\left(
x^k-\frac{1}{\delta_k}
\bigl(\nabla h(x^k)+A^*z^k\bigr)
\right),
\label{eq:reduced-x}\\
z^{k+1}
&=
\prox_{g^*/\gamma_k}
\left(\frac{Ax^{k+1}}{\gamma_k}\right).
\label{eq:reduced-z}
\end{align}

The following result establishes a subsequential convergence guarantee for
this reduced scheme.

\begin{theorem}\label{cor:reduced-subsequential}
Assume that $\mathcal S$ is nonempty,  convex, and compact; that
$h$ is differentiable on an open neighborhood of $\mathcal S$ and
$\nabla h$ is $L_{\nabla h}$-Lipschitz continuous there for some
$L_{\nabla h}>0$; that $g:\mathbb R^s\to\overline{\mathbb R}$ is proper, convex, and lower
semicontinuous; that
$A(\mathcal S)\subseteq\dom(\partial g)$; and that there exists
$\ell>0$ such that
$
    \dist\bigl(0,\partial g(Ax)\bigr)\leq\ell
    \;\text{for every }x\in\mathcal S.
$
Then every sequence $\{(x^k,z^k)\}_{k\ge 0}$ generated by
\eqref{eq:reduced-x}--\eqref{eq:reduced-z} satisfies
\begin{equation}\label{eq:reduced-liminf}
    \liminf_{k\to\infty}
    \delta_k\|x^{k+1}-x^k\|=0.
\end{equation}
Moreover, there exist a strictly increasing sequence of indices
\(\{k_j\}_{j\geq1}\) and a pair
\((\bar x,\bar z)\in\mathcal S\times\mathbb R^s\) such that
$
    x^{k_j}\to\bar x,\; z^{k_j}\to\bar z,
$
 and
\begin{equation}\label{eq:reduced-lifted-stationarity}
    \overline z\in\partial g(A\overline x),
    \qquad
    0\in
    \nabla h(\overline x)+A^*\overline z+N_{\mathcal S}(\overline x).
\end{equation}
Consequently, $\overline x$ is an exact stationary point of \eqref{eq:PP}, in
the sense that
\begin{equation}\label{eq:reduced-stationarity}
    0\in
    \nabla h(\overline x)
    +A^*\partial g(A\overline x)
    +N_{\mathcal S}(\overline x).
\end{equation}

\end{theorem}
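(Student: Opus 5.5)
The plan is to reduce Theorem~\ref{cor:reduced-subsequential} to the fractional setting and invoke Theorem~\ref{PriTheo2R}. The reduction uses the data $K:=$ the identity on $\mathbb R^n$ (so $p=n$) and $f\equiv 1$ on $\mathbb R^n$. Then $f$ is proper, convex and lsc, $\operatorname{dom} f=\mathbb R^n$, and $f(Kx)=1>0$, so Assumption~\ref{ass:standing}(i) holds.

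In this specialization we have $\partial f\equiv\{0\}$, hence $y^{k+1}=0$. The $x$-update \eqref{eq:x-update} then coincides with \eqref{eq:reduced-x}, whatever $\theta_k$ is, and the $z$-update is identical to \eqref{eq:reduced-z}. So the reduced sequence is exactly the S-FSPS sequence for this instance, with any $\theta_0>0$.

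Assumption~\ref{ass:standing}(iii) is given in the hypotheses. The only issue is Assumption~\ref{ass:standing}(ii), which requires $\inf_{\mathcal S}\{g(Ax)+h(x)\}>0$. I will handle it by replacing $h$ with $h+c$ for a constant $c$. This changes neither $\nabla h$, the iterates, nor stationarity.

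To choose $c$, note that $g\circ A$ is finite on $\mathcal S$ by (iii). By Lemma~\ref{lem:uniform-moreau-estimates}(iv) it is $\ell$-Lipschitz on $A(\mathcal S)$, hence $\ell\sigma_A$-Lipschitz on $\mathcal S$. Since $\mathcal S$ is compact and $h$ is continuous, $g\circ A+h$ is bounded below on $\mathcal S$, and a suitable $c$ makes the infimum positive.

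Applying Theorem~\ref{PriTheo2R}(v) gives \eqref{eq:reduced-liminf}, since $\delta_k$ is the same.

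For the subsequence I pick $\{k_j\}$ with $\delta_{k_j}\|x^{k_j+1}-x^{k_j}\|\to0$. By boundedness (Theorem~\ref{PriTheo2R}(ii)) I pass to a further subsequence along which $x^{k_j}\to\bar x$ and $z^{k_j}\to\bar z$. Theorem~\ref{PriTheo2R}(vii) already gives that $\bar x$ is limiting lifted stationary; with $f\equiv1$ and $\partial f\equiv\{0\}$ this is exactly \eqref{eq:reduced-stationarity}. The stronger statement \eqref{eq:reduced-lifted-stationarity}, with the specific limit $\bar z$, is what I would verify directly, mimicking the proof of (vii).

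\textbf{Step one.} First, $z^{k}=\nabla g_{\gamma_{k-1}}(Ax^{k})\in\partial g(w^k)$ with $w^k:=\prox_{\gamma_{k-1}g}(Ax^k)$, and $\|w^k-Ax^k\|\le\gamma_{k-1}\ell\to0$ by Lemma~\ref{lem:uniform-moreau-estimates}(ii). Closedness of the graph of $\partial g$ along with $w^{k_j}\to A\bar x$ then gives $\bar z\in\partial g(A\bar x)$.

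The catch is that graph closedness needs $g(w^{k_j})\to g(A\bar x)$, or else the lsc-convex closedness of the graph of the subdifferential, which holds for convex lsc $g$ without value convergence since $\partial g$ is maximal monotone. I will use the latter.

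\textbf{Step two.} From the projection step,
\[
\delta_k(x^k-x^{k+1})-\nabla h(x^k)-A^*z^k\in N_{\mathcal S}(x^{k+1}).
\]
Along $k_j$ the first term tends to $0$. I need $x^{k_j+1}\to\bar x$, which holds because $\delta_k\to\infty$ forces $\|x^{k_j+1}-x^{k_j}\|\to0$. With $z^{k_j}\to\bar z$, closedness of the graph of $N_{\mathcal S}$ yields $0\in\nabla h(\bar x)+A^*\bar z+N_{\mathcal S}(\bar x)$.

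\textbf{Expected obstacle.} The main delicacy is the index shift: $z^{k_j}$ pairs with $x^{k_j}$ through $\gamma_{k_j-1}$, while the normal-cone relation is evaluated at $x^{k_j+1}$. Both converge to $\bar x$, so the shift is harmless, but I will state it explicitly.
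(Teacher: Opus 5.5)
Your proof is correct, but it obtains \eqref{eq:reduced-liminf} by a different route from the paper's.

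\textbf{The paper's route.} The paper does not pass through the fractional theory. It works directly with $\Psi_k=g_{\gamma_{k-1}}(Ax^k)+h(x^k)$ and applies the descent lemma to the projected-gradient step for $g_{\gamma_{k-1}}\circ A+h$. It handles the change of smoothing parameter via $g_{\gamma_k}(Ax^{k+1})\le g_{\gamma_{k-1}}(Ax^{k+1})+\frac{\gamma_{k-1}-\gamma_k}{2}\|z^{k+1}\|^2$, which yields \eqref{despsi}. The lower bound from Lemma~\ref{lem:uniform-moreau-estimates}(iii) and telescoping of $\gamma_{k-1}-\gamma_k$ then give $\sum_k\delta_k\|x^{k+1}-x^k\|^2<\infty$. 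Finally, \eqref{eq:reduced-liminf} follows from $\sum_k1/\delta_k=+\infty$.

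\textbf{Your route.} You identify the reduced scheme with S-FSPS for $K=I$ and $f\equiv1$, so that $y^{k+1}=0$ and $\theta_k$ drops out of the $x$-update. You repair Assumption~\ref{ass:standing}(ii) by the harmless shift $h\mapsto h+c$, and you import \eqref{eq:reduced-liminf} from Theorem~\ref{PriTheo2R}(v). This is shorter, and it is the same identification the paper itself uses for Corollary~\ref{cor:counterexample-consequences}. However, it rests on items (i)--(v) of Theorem~\ref{PriTheo2R}, which the paper takes from \cite{BotLiTao2025} without reproof. The paper's argument is self-contained, needs no positivity of the numerator, and yields the stronger summability statement.

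\textbf{Limit passage.} Your Steps one and two match the paper's argument. The paper phrases Step one through closedness of $\operatorname{gph}\partial g^*$ applied to $Ax^{k_j}-\gamma_{k_j-1}z^{k_j}\in\partial g^*(z^{k_j})$, which is equivalent to your use of $w^k=\prox_{\gamma_{k-1}g}(Ax^k)$. Your worry about value convergence is unnecessary, since $\operatorname{gph}\partial g$ is closed for proper lsc convex $g$.

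\textbf{Points to fix or tidy.}
\begin{enumerate}[label=\textup{(\alph*)}]
\item Assumption~\ref{ass:standing} requires $A\neq0$, but Theorem~\ref{cor:reduced-subsequential} does not assume this. You must therefore treat $A=0$ separately. In that case $\delta_k\equiv\chi L_{\nabla h}$ and the scheme is plain projected gradient for $h$, so the standard sufficient-decrease argument (or the paper's direct argument with $\sigma_A=0$) applies.
\item In Step two you only need $\delta_k$ bounded away from zero, not $\delta_k\to\infty$.
\item Your appeal to Theorem~\ref{PriTheo2R}(vii) is redundant, because Steps one and two already give \eqref{eq:reduced-lifted-stationarity}, which implies \eqref{eq:reduced-stationarity}. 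Likewise, boundedness of $\{z^k\}$ follows directly from Lemma~\ref{lem:uniform-moreau-estimates}(i), without Theorem~\ref{PriTheo2R}(ii).
\end{enumerate}
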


\begin{proof}
Invoking Lemma~\ref{lem:uniform-moreau-estimates}(i) together with the
dual update~\eqref{eq:reduced-z}, we obtain
\begin{eqnarray}\label{zkbound}
    \|z^{k+1}\|
    =\|\nabla g_{\gamma_k}(A x^{k+1})\|\le \ell.
\end{eqnarray}
Hence, the dual sequence \(\{z^k\}\) is bounded.
Moreover,
$
    A(\mathcal S)\subseteq\dom(\partial g)\subseteq\dom g.
$
Thus $g\circ A+h$ is finite-valued and lower semicontinuous on the compact
set $\mathcal S$, and therefore
$
    \inf_{x\in\mathcal S}\bigl(g(Ax)+h(x)\bigr)>-\infty.
$

Define $\Psi_k:=\Psi(x^k,z^k;\gamma_{k-1})$ for $k\ge 1$. By the dual update (\ref{eq:reduced-z}) at \((k-1)\)th iteration,
\begin{eqnarray}\label{zkop}
    z^k=\nabla g_{\gamma_{k-1}}(Ax^k),
\end{eqnarray}
and hence
\[
    \Psi_k=g_{\gamma_{k-1}}(Ax^k)+h(x^k).
\]

Let $d^k:=x^{k+1}-x^k$ and define
$
    F_k:=g_{\gamma_{k-1}}\circ A+h,
    \;
    L_k:=L_{\nabla h}+\frac{\sigma_A^2}{\gamma_{k-1}}.
$
Since
the update \eqref{eq:reduced-x} is a
projected-gradient step for $F_k$ with stepsize $1/\delta_k$,
 the descent lemma yields
\[
    F_k(x^{k+1})
    \leq
    F_k(x^k)
    -
    \frac{\delta_k-L_k}{2}\|d^k\|^2.
\]
Since $\gamma_k\leq\gamma_{k-1}$, we have
\[
\begin{aligned}
    \delta_k-L_k
    =
    \chi\left(
        L_{\nabla h}+\frac{\sigma_A^2}{\gamma_k}
    \right)
    -
    \left(
        L_{\nabla h}+\frac{\sigma_A^2}{\gamma_{k-1}}
    \right)                                                  
    \geq
    (\chi-1)
    \left(
        L_{\nabla h}+\frac{\sigma_A^2}{\gamma_k}
    \right).
\end{aligned}
\]
Consequently,
\[
    g_{\gamma_{k-1}}(Ax^{k+1})+h(x^{k+1})
    \leq
    g_{\gamma_{k-1}}(Ax^k)+h(x^k)
    -
    \widetilde c_k\|d^k\|^2,
\]
where
\[
    \widetilde c_k
    :=
    \frac{\chi-1}{2}
    \left(
        L_{\nabla h}+\frac{\sigma_A^2}{\gamma_k}
    \right)
    =
    \frac{\chi-1}{2\chi}\delta_k
    >0.
\]
Moreover, the dual representation of the Moreau envelope and the
definition of $z^{k+1}$ imply
\[
    g_{\gamma_k}(Ax^{k+1})
    \leq
    g_{\gamma_{k-1}}(Ax^{k+1})
    +
    \frac{\gamma_{k-1}-\gamma_k}{2}
    \|z^{k+1}\|^2.
\]
Combining the preceding two inequalities gives
\begin{eqnarray}\label{despsi}
    \Psi_{k+1}
    \leq
    \Psi_k-\widetilde c_k\|x^{k+1}-x^k\|^2
    +\Xi^{k+1},
\end{eqnarray}
 where
\begin{eqnarray*}
\Xi^{k+1} := \frac{\gamma_{k-1}-\gamma_{k}}{2}\|{z}^{k+1}\|^2 \geq 0.
\end{eqnarray*}
\noindent Lemma~\ref{lem:uniform-moreau-estimates}(iii) gives the lower bound
\[
    g_{\gamma_{k-1}}\!\left(Ax^k\right)
    \geq
    g\!\left(Ax^k\right)
    -
    \frac{\gamma_{k-1}\ell^2}{2},
\]
so $\{\Psi_k\}_{k\ge 1}$ is bounded below. Combining (\ref{zkbound}), (\ref{despsi}), and the fact that
$\gamma_{k-1}\geq\gamma_k$ yields
$$
    \sum_{k=1}^{\infty} \Xi^{k+1}
    \leq
    \frac{\ell^2}{2}
    \sum_{k=1}^{\infty}\left(\gamma_{k-1}-\gamma_k\right)
    <+\infty.
$$

Consequently,
\begin{eqnarray}\label{addinf}\sum_{k=1}^{\infty}
    \delta_k\left\|x^{k+1}-x^k\right\|^2
    <+\infty.\end{eqnarray}
Since \(L_{\nabla h}>0\),
$\frac{1}{\delta_k} =
    \frac{\gamma_k}
    {\chi\left(L_{\nabla h}\gamma_k+\sigma_A^2\right)}
    \geq
    \frac{\gamma_k}
    {\chi\left(L_{\nabla h}\gamma_0+\sigma_A^2\right)}.$
    Thus,
$\sum_{k=1}^{\infty}\gamma_{k}=+\infty
    \;\Longrightarrow\;
    \sum_{k=1}^{\infty}\frac{1}{\delta_k}=+\infty.$
It then follows that
$$
    \liminf_{k\to+\infty}
    \delta_k\left\|x^{k+1}-x^k\right\|
    =0.
$$
Suppose not. Then there exist $\varepsilon>0$ and $\widehat K$ such that
$\delta_k\|x^{k+1}-x^k\|\ge\varepsilon$ for all $k\ge\widehat K$. Hence
$
  \delta_k^2\,\|x^{k+1}-x^k\|^2 \;\ge\; \varepsilon^2$
  for all $k\ge\widehat K,$
and summing over $k\ge\widehat K$ gives
\begin{eqnarray*}
  \sum_{k\ge\widehat K}\frac{[\delta_k^2\,\|x^{k+1}-x^k\|^2]}{\delta_k}
  \;\ge\; \varepsilon^2\sum_{k\ge\widehat K}\frac{1}{\delta_k} \;=\; +\infty,
\end{eqnarray*}
which contradicts (\ref{addinf}).

\noindent Choose a subsequence $\{k_j\}_{j\in\mathbb N}$ such that
$ \delta_{k_j}\|x^{k_j+1}-x^{k_j}\|\to0.$
By the compactness of $\mathcal S$ and the boundedness of the dual sequence,
we may pass to a further subsequence, without relabeling, such that
$x^{k_j}\to\overline x,\; z^{k_j}\to\overline z.$
Since $\delta_k$ is bounded away from zero under
\eqref{eq:delta-general}, we also have
$x^{k_j+1}-x^{k_j}\to0
    \;\text{and hence}\;
    x^{k_j+1}\to\overline x.$
The optimality condition for the primal update is
\begin{equation}\label{eq:reduced-x-optimality}
   \xi_j\in N_{\mathcal S}(x^{k_j+1}),
\end{equation}
where $\xi_j:=-[ \nabla h(x^{k_j})
    +A^*z^{k_j}
    +\delta_{k_j}(x^{k_j+1}-x^{k_j})]$.
On the other hand, the dual update at iteration $k_j-1$ gives
\begin{equation}\label{eq:reduced-z-optimality}
    Ax^{k_j}-\gamma_{k_j-1}z^{k_j}
    \in\partial g^*(z^{k_j}).
\end{equation}
Passing to the limit in \eqref{eq:reduced-x-optimality} and
\eqref{eq:reduced-z-optimality}, using $\gamma_{k_j-1}\to0$, $\xi_j\to -[\nabla h(\overline x)+A^*\overline z]$ and the closedness of
the graphs of $N_{\mathcal S}$ and $\partial g^*$ \cite[Proposition~8.7]{RockWets}, yields
$
    0\in
    \nabla h(\overline x)+A^*\overline z+N_{\mathcal S}(\overline x)
$
and
$
    A\overline x\in\partial g^*(\overline z).
$
The latter relation is equivalent to
$\overline z\in\partial g(A\overline x)$. This proves
\eqref{eq:reduced-lifted-stationarity}, and
\eqref{eq:reduced-stationarity} follows immediately.
\end{proof}
\begin{corollary}\label{cor:counterexample-consequences}

Apply \eqref{eq:reduced-x}--\eqref{eq:reduced-z}  to the instance constructed in Theorem
\ref{thm:main-counterexample}, with the same choices
\[
    \chi=2,\qquad
    \gamma_k=\frac{1}{k+K_0},\qquad
    \delta_k=2(k+K_0+1),
\]
and initialize
\[
    x^0=(1,p^0),\qquad z^0=(1,0).
\]
Then the following properties hold:
\begin{enumerate}[label=\textup{(\roman*)}]
\item The cluster set of the primal sequence is given by
\[
    \operatorname{cluster}(x^k)
    =
    \{1\}\times\mathbb S^1 .
\]
Moreover, every cluster point of the primal sequence is an exact stationary point of~\eqref{eq:PP} in the sense of~\eqref{eq:reduced-stationarity}.

\item The generated trajectory has infinite length, namely,
\[
    \sum_{k=0}^{\infty}\|x^{k+1}-x^k\|
    =
    +\infty .
\]
The primal sequence does not converge.

\end{enumerate}
\end{corollary}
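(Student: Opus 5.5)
The plan is to observe that the reduced scheme \eqref{eq:reduced-x}--\eqref{eq:reduced-z} is exactly S-FSPS with the $y$- and $\theta$-updates removed. In the instance of Theorem~\ref{thm:main-counterexample}, those updates had no effect on the primal orbit anyway, because $f\equiv1$ forces $y^{k+1}=0$ and hence $\frac{\theta_k}{\delta_k}K^*y^{k+1}=0$. So the orbit computed in Step~7 of that proof should carry over verbatim.

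Concretely, we argue by induction on $k$ that $x^k=(1,p^k)$ and $z^k=(1,0)$.

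\begin{itemize}
\item \textbf{Base case.} This is the given initialization.
\item \textbf{Dual update.} Since $Ax=(1,0)$ for every $x\in\mathcal S$, and $g^*=\iota_{[-1,1]^2}$ so that $\prox_{g^*/\gamma_k}=\Proj_{[-1,1]^2}$, the dual update gives
\[
z^{k+1}=\Proj_{[-1,1]^2}\bigl(1/\gamma_k,0\bigr)=(1,0).
\]
\item \textbf{Primal update.} We use $A^*z^k=(1,0,0)$, $\nabla h(x^k)=(0,v_k)$ and $p^{k+1}=p^k-\alpha_kv_k$. Then the argument of the projection in \eqref{eq:reduced-x} equals $(1-\alpha_k,p^{k+1})$. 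Since $\Proj_{\mathcal S}(t,p)=(1,\Proj_{\overline{\mathbb B}_{\R^2}}p)$ and $\|p^{k+1}\|=r_{k+1}<1$, we get $x^{k+1}=(1,p^{k+1})$.
\end{itemize}

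Hence the primal sequence is exactly the spiral $\{(1,p^k)\}$ of Step~2.

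Part (ii) and the cluster-set identity in part (i) then follow directly from Step~2 of the proof of Theorem~\ref{thm:main-counterexample}. The nonsummable angular increments $\beta_k\to0$ together with $r_k\to1$ give $\clust(x^k)=\{1\}\times\Sone$. The chord estimate \eqref{eq:chord} gives $\sum_k\|x^{k+1}-x^k\|=\sum_k\|p^{k+1}-p^k\|\ge\pi^{-1}\sum_k\beta_k=+\infty$. Nonconvergence is immediate because the cluster set has more than one point.

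For stationarity in part (i), take $\overline x=(1,q)$ with $q\in\Sone$. We have $\nabla h(\overline x)=(0,\nabla\widetilde h_0(q))=0$ and $A^*(1,0)=(1,0,0)$ with $(1,0)\in\partial g(A\overline x)=\{1\}\times[-1,1]$. Moreover, $(-1,0,0)\in N_{\mathcal S}(\overline x)$, because $\mathcal S\subseteq\{x_1=1\}$ and so every multiple of $e_1$ is normal. Together these give \eqref{eq:reduced-stationarity}.

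As a consistency check, Theorem~\ref{cor:reduced-subsequential} applies: its hypotheses were verified in Theorem~\ref{thm:main-counterexample} with $\ell=1$ and $\operatorname{Lip}(\nabla h)\le1$, and with $L_{\nabla h}=1$, $\sigma_A=1$ the step constant is $\delta_k=2(k+K_0+1)$. Its subsequential stationary limits must then lie in $\{1\}\times\Sone$, in agreement with the direct computation.

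The only step needing care is confirming that the $\delta_k$ of \eqref{eq:delta-general} for the reduced scheme coincides with the one used to define $\alpha_k$. This is what makes the orbit reproduce the prescribed spiral exactly. Beyond that there is no real obstacle, since all the hard work (the Whitney extension) is inherited.
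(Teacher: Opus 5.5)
Your proposal is correct and follows essentially the same route as the paper: because $f\equiv1$ forces $y^{k+1}=0$, the reduced scheme coincides with S-FSPS on this instance, so the orbit, cluster set, infinite length and stationarity of the cluster points are all inherited from Theorem~\ref{thm:main-counterexample}. Where the paper simply cites that theorem, you re-verify each of these facts explicitly.
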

\begin{proof}
Since \(f\equiv1\), the S-FSPS primal and dual updates reduce to
\eqref{eq:reduced-x}--\eqref{eq:reduced-z}.
The conclusions  follow directly from Theorem~\ref{thm:main-counterexample}.
\end{proof}

\section{Conclusion}\label{sec:conclusion}

This paper shows that the standing assumptions of S-FSPS do not
guarantee convergence of the entire primal sequence. Using a
compatible first-order jet and a $C^{1,1}$ Whitney extension theorem,
we construct two admissible instances whose primal iterates follow
slowly rotating spirals, each with cluster set
$\{1\}\times\mathbb S^1$. The resulting sequences have infinite
length and fail to converge, although every cluster point is an exact
limiting lifted stationary point.  This phenomenon persists when the feasible set is
full-dimensional, the linear operator  $A$ has full row rank,
and both the numerator and the denominator are nonconstant. The construction
also yields a nonconvergent example for the corresponding variable-smoothing,
single-loop, full-splitting method for  nonconvex composite optimization,
despite the existence of a subsequence converging to an exact stationary
point. Thus, the existing convergence theory is sharp with respect to
whole-sequence convergence: subsequences may converge to exact stationary
points while the full sequence fails to converge. The counterexamples do not arise from unboundedness or from
nonstationarity of their cluster points; rather, they show that
stationarity of all cluster points is insufficient to control the
global geometry of the generated trajectories.




\section*{Statements and Declarations}

\paragraph{Funding.} The research of Min Tao was supported  by the National Natural Science Foundation of China (Grant No. 12471289).

\paragraph{Competing interests.}
The author has no relevant financial or non-financial interests to disclose.

\paragraph{Data availability.}
No data were generated or analyzed in this study.

\appendix
\section{Proof of Lemma \ref{lem:uniform-moreau-estimates}}\label{appA}

\begin{proof}
Fix $w\in A(\mathcal S)$. Since $\partial g(w)$ is a nonempty closed convex
set, there exists $v_w\in\partial g(w)$ satisfying
$
\|v_w\|=\dist(0,\partial g(w))\leq\ell.$
Set $p:=\prox_{\gamma g}(w)$ and
$q:=\nabla g_\gamma(w)=(w-p)/\gamma$. Because
$q\in\partial g(p)$ and $v_w\in\partial g(w)$, monotonicity of $\partial g$
gives
$
\langle q-v_w,p-w\rangle\geq0.$
Since $p-w=-\gamma q$, it follows that
$
\|q\|^2\leq\langle v_w,q\rangle
\leq\|v_w\|\,\|q\|,$
which proves (i). (ii) follows immediately from
$p-w=-\gamma q$.
The subgradient inequality gives
$
g(u)\geq g(w)+\langle v_w,u-w\rangle.
$
Consequently,
\begin{eqnarray*}
g_\gamma(w)
\geq g(w)+\inf_{d\in\R^s}
\left\{\langle v_w,d\rangle+\frac{1}{2\gamma}\|d\|^2\right\}
=g(w)-\frac{\gamma}{2}\|v_w\|^2
\geq g(w)-\frac{\gamma\ell^2}{2}.
\end{eqnarray*}
Together with $g_\gamma(w)\leq g(w)$, this proves
(iii).
Finally, let $u,v\in A(\mathcal S)$, and choose
$v_u\in\partial g(u)$ and $v_v\in\partial g(v)$ with
$\|v_u\|,\|v_v\|\leq\ell$. Applying the subgradient inequality at $u$ and
at $v$ gives
$
g(v)\geq g(u)+\langle v_u,v-u\rangle,$ and
$
g(u)\geq g(v)+\langle v_v,u-v\rangle.
$
These two inequalities imply (iv).
\end{proof}

\section{Proof of Theorem \ref{PriTheo2R}}\label{appB}
\begin{proof}In \cite[Theorem~4.3]{BotLiTao2025}, the additional condition
$
    A(\mathcal S)\subseteq\operatorname{int}(\operatorname{dom}g)
$
is imposed only for assertions \textup{(vi)}--\textup{(vii)};
the proofs of assertions \textup{(i)}--\textup{(v)} do not use
this condition. It therefore remains  to reprove
\textup{(vi)} and \textup{(vii)}.\\
\textup{(vi)}
Let $(\overline x,\overline y,\overline z)\in\Omega$ and choose a subsequence such that
$
(x^{k_j},y^{k_j},z^{k_j})\longrightarrow(\overline x,\overline y,\overline z).
$
Set $\widehat\gamma_j:=\gamma_{k_j-1}$. Then $\widehat\gamma_j\to0$ and
$Ax^{k_j},A\overline x\in A(\mathcal S)$. By
Lemma~\ref{lem:uniform-moreau-estimates} (iii) and (iv),
\begin{align}
|g_{\widehat\gamma_j}(Ax^{k_j})-g(A\overline x)|
&\leq
|g_{\widehat\gamma_j}(Ax^{k_j})-g(Ax^{k_j})|
+|g(Ax^{k_j})-g(A\overline x)|\nonumber\\
&\leq\frac{\widehat\gamma_j\ell^2}{2}
+\ell\|Ax^{k_j}-A\overline x\|
\longrightarrow0.
\label{eq:moreau-value-convergence}
\end{align}
Since
$
\Psi(x^{k_j},z^{k_j};\widehat \gamma_j)
=g_{\widehat \gamma_j}(Ax^{k_j})+h(x^{k_j})
$
and
$
\theta_{k_j}
=\frac{\Psi(x^{k_j},z^{k_j};\widehat \gamma_j)}
{f(Kx^{k_j})},
$
letting $j\to\infty$ and using the continuity of $h$ on $\mathcal S$ and of $f$ on
$K(\mathcal S)$ yields
$
\frac{g(A\overline x)+h(\overline x)+\iota_{\mathcal S}(\overline x)}
{f(K\overline x)}=\overline\theta.
$
This proves \textup{(vi)}.

\textup{(vii)}
Let \(\{{ x}^{k_j}\}\) be a subsequence of $\{ x^k\}$ such that
\begin{eqnarray}\label{add11}
\lim\limits_{j \to +\infty} \delta_{k_j} \|{ x}^{k_j+1} - { x}^{k_j}\| = 0
 \end{eqnarray}
 and let $\overline{ x} \in {\cal S}$ be a cluster point of it. Then there exists a further subsequence $\{{ x}^{k_s}\}$ of $\{{ x}^{k_j}\}$ that converges to ${\overline{ x}}$ as $s \to +\infty$. Since $\delta_k\ge\chi L_{\nabla h}$ for all $k \geq 0$, we have $\lim_{s\to+\infty}\|{ x}^{k_s+1}-{ x}^{k_s}\|=0$, thus ${ x}^{k_s+1} \rightarrow \overline { x}$ as $s \to +\infty$.
By assertion (ii), $\{y^k\}$ is bounded. Passing to a further
subsequence if necessary, suppose that
$y^{k_s+1}\to\overline y$. Since
$y^{k_s+1}\in\partial f(Kx^{k_s})$ and
$Kx^{k_s}\to K\overline x$, the closedness of
$\operatorname{gph}\partial f$ yields
$\overline y\in\partial f(K\overline x)$.
From  the $ x$-update in (\ref{eq:x-update}) and $\nabla(g_{\gamma_{k_s-1}} \circ A)({x}^{k_s})=A^* \nabla g_{\gamma_{k_s-1}}(A {x}^{k_s})$, for all $s \geq 0$, there exists ${ \xi}^{k_s+1} \in \partial \iota_{\cal S}({ x}^{k_s+1})$ such that
\begin{eqnarray}\label{KKT9}
\ \ \ \ \ \ \ \ 0 = { \xi}^{k_s+1}  +\delta_{k_s}({ x}^{k_s+1}-{ x}^{k_s}) + A^*\nabla g_{\gamma_{k_s-1}}(A{ x}^{k_s}) + \nabla h({ x}^{k_s})  - \theta_{k_s} K^* { y}^{k_s+1}.
\end{eqnarray}

Recall
$\widehat \gamma_s:=\gamma_{k_s-1},$  and let $q_s:=\nabla g_{\widehat\gamma_s}(Ax^{k_s}),$ and $p_s:=\prox_{\widehat\gamma_s g}(Ax^{k_s}).$
By  Lemma \ref{lem:uniform-moreau-estimates}(i), $\|q_s\|\leq\ell$. Passing to a
subsequence, suppose that $q_s\to\overline q$. Moreover,
$
\|p_s-Ax^{k_s}\|
\leq\widehat\gamma_s\ell\longrightarrow0,
$
so $p_s\to A {\overline x}$. Since \(q_s\in\partial g(p_s)\), \(p_s\to A{\overline x}\), and \(q_s\to \overline{q}\),
the closedness of \(\operatorname{gph}\partial g\) yields
\(\overline q\in\partial g(A\overline{x})\).
All terms in \eqref{KKT9}, except possibly
$\xi^{k_s+1}$, are bounded; hence $\{\xi^{k_s+1}\}$ is bounded. Passing
to a further subsequence $\xi^{k_s+1}\to \overline \xi$, and using the closedness of
$\operatorname{gph}(\partial\iota_{\mathcal S})$, we obtain
$\overline\xi\in\partial\iota_{\mathcal S}(\overline x)$.
Moreover, combining with $\theta_{k_s}\to\overline \theta$ due to (iv), (\ref{add11}) and (\ref{KKT9}), we have
$
0=\overline\xi+A^*\overline q+\nabla h(\overline x)
-\overline\theta K^*\overline y.
$
Therefore,
$
0\in\partial\iota_{\mathcal S}(\overline x)
+A^*\partial g(A\overline x)+\nabla h(\overline x)
-\overline\theta K^*\partial f(K\overline x).
$
Moreover, assertion~\textup{(vi)} gives
\[
    \overline\theta=\frac{g(A\overline x)+h(\overline x)}{f(K\overline x)}.
\]
Multiplying the preceding inclusion by \(f(K\overline x)>0\) therefore shows
that \(\overline x\) is a limiting lifted stationary point of
\eqref{eq:fractional-problem}.
\end{proof}
\section{Proof of the chord estimate}\label{app:chord}
\begin{lemma}
\label{lem:chord-lower-bound}
For every \(k\ge 0\),
\[
\|p^{k+1}-p^k\|
\ge \frac{\beta_k}{\pi}.
\]
\end{lemma}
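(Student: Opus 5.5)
We need $\|p^{k+1}-p^k\|\ge \beta_k/\pi$, where $p^k=r_k u(\varphi_k)$, $p^{k+1}=r_{k+1}u(\varphi_k+\beta_k)$, $r_k=1-d_k$, $0<r_k<r_{k+1}<1$, and $0<\beta_k\le 1$ by the bound $\eta\alpha_k d_k^2\le1$. The plan is to bound the chord from below by a purely angular quantity, namely the length of the chord on a circle of radius $r_k$, and then use concavity of the sine.

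\textbf{Step 1 (law of cosines).} I would write
\[
\|p^{k+1}-p^k\|^2=r_k^2+r_{k+1}^2-2r_kr_{k+1}\cos\beta_k=(r_{k+1}-r_k)^2+2r_kr_{k+1}(1-\cos\beta_k).
\]
Dropping the nonnegative radial term $(r_{k+1}-r_k)^2$ and using $r_{k+1}\ge r_k$ gives
\[
\|p^{k+1}-p^k\|\ge \sqrt{2r_k^2(1-\cos\beta_k)}=2r_k\sin(\beta_k/2),
\]
since $\beta_k/2\in(0,\pi/2]$.

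\textbf{Step 2 (Jordan's inequality).} For $t\in[0,\pi/2]$ we have $\sin t\ge 2t/\pi$. With $t=\beta_k/2$ this yields $2\sin(\beta_k/2)\ge 2\beta_k/\pi$, and hence
\[
\|p^{k+1}-p^k\|\ge \frac{2r_k\beta_k}{\pi}.
\]

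\textbf{Step 3 (radius bound).} It remains to show $r_k\ge 1/2$. By \eqref{adddk} and $\eta\le1/2$, we get $d_k\le \eta^2\le 1/4$, so $r_k=1-d_k\ge 3/4\ge 1/2$. This gives the claimed bound $\beta_k/\pi$; in fact the argument yields the slightly stronger constant $3/(2\pi)$.

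There is no real obstacle here. The only point needing care is Step 1: the cross term must be handled via $r_kr_{k+1}\ge r_k^2$, which requires the monotonicity $r_{k+1}\ge r_k$, i.e.\ that $d_k$ is decreasing. This holds because $d_k=(\log(k+K_0))^{-1/2}$ is decreasing in $k$.
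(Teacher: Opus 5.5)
Your proof is correct and follows essentially the same argument as the paper: the polar law of cosines, dropping the radial term $(r_{k+1}-r_k)^2$, and Jordan's inequality $\sin t\ge 2t/\pi$ on $[0,\pi/2]$. The only difference is cosmetic: the paper gets $2\sqrt{r_kr_{k+1}}\ge 1$ directly from $r_k,r_{k+1}\ge 1/2$, whereas you use $r_{k+1}\ge r_k$ together with $r_k\ge 3/4$, so the monotonicity of $d_k$ you flag as the delicate point is not actually needed.
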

\begin{proof}
Since the angle between $u(\varphi_{k+1})$ and $u(\varphi_k)$ is $\beta_k$,
\[
\langle u(\varphi_{k+1}),u(\varphi_k)\rangle=\cos\beta_k.
\]
Consequently,
\begin{align*}
\|p^{k+1}-p^k\|^2
&=
\|r_{k+1}u(\varphi_{k+1})-r_ku(\varphi_k)\|^2\\
&=
r_{k+1}^2+r_k^2-2r_kr_{k+1}\cos\beta_k\\
&=
(r_{k+1}-r_k)^2
+4r_kr_{k+1}\sin^2\!\left(\frac{\beta_k}{2}\right).
\end{align*}
Dropping the first nonnegative term gives
\[
\|p^{k+1}-p^k\|
\ge
2\sqrt{r_kr_{k+1}}
\sin\!\left(\frac{\beta_k}{2}\right).
\]
Since $r_k,r_{k+1}\ge 1/2$,
$
2\sqrt{r_kr_{k+1}}\ge1,
$
and therefore
$
\|p^{k+1}-p^k\|
\ge
\sin\!\left(\frac{\beta_k}{2}\right).
$
For $0\le t\le\pi/2$, concavity of $\sin$ gives
$
\sin t\ge \frac{2}{\pi}t.
$
Because $0<\beta_k\le1<\pi$ for all $k\ge0$, we have
$0<\beta_k/2<\pi/2$, and hence
\[
\sin\!\left(\frac{\beta_k}{2}\right)
\ge
\frac{\beta_k}{\pi}.
\]
Thus
$
\|p^{k+1}-p^k\|\ge\frac{\beta_k}{\pi}.
$
\end{proof}

\section{Proof of the elementary spiral estimates}\label{app:spiral-estimates}

\begin{lemma}
\label{lem:elementary}
Fix \(\eta\in(0,1/2]\) and choose \(K_0\) such that
\eqref{eq:K0-conditions} holds. Then there exist absolute constants
\(\widehat C_1,\widehat C_2,\widehat C_3,\widehat C_4>0\), independent
of \(k\), \(K_0\), and \(\eta\), such that
\begin{align}
    0<d_k-d_{k+1}
    &\leq \widehat C_1\alpha_kd_k^3,
    \label{eq:d-first}\\
    \norm{v_k}
    &\leq \widehat C_2\eta d_k^2,
    \label{eq:v-size}\\
    \norm{v_{k+1}-v_k}
    &\leq \widehat C_3\eta\,\norm{p^{k+1}-p^k},
    \label{eq:v-consecutive}\\
    0\leq a_k
    &\leq \widehat C_4\eta^2d_k^2.
    \label{eq:a-size}
\end{align}
\end{lemma}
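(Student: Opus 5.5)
I will prove the four estimates of Lemma~\ref{lem:elementary} in order, since each feeds into the next. The argument uses the explicit formulas $d_k=(\log(k+K_0))^{-1/2}$, $\alpha_k=1/(2(k+K_0+1))$ and $\beta_k=\eta\alpha_kd_k^2$, together with the standing bounds $d_k\le\eta^2\le 1/4$ and $\beta_k\le1$ from \eqref{eq:K0-conditions}.

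\textbf{Estimate \eqref{eq:d-first}.} Set $t:=k+K_0$ and $\phi(t):=(\log t)^{-1/2}$, so that $\phi'(t)=-\tfrac12 t^{-1}(\log t)^{-3/2}$. By the mean value theorem,
\[
d_k-d_{k+1}=\tfrac12\,\xi^{-1}(\log\xi)^{-3/2}\qquad\text{for some }\xi\in(t,t+1).
\]
This quantity is positive and bounded by $\tfrac12\,t^{-1}(\log t)^{-3/2}=\tfrac12\,t^{-1}d_k^3$. Since $t^{-1}\le 4\alpha_k$ (because $t+1\le 2t$), this gives \eqref{eq:d-first} with $\widehat C_1=2$. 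For later use I also record $d_k/d_{k+1}\le 2$, which holds since $\log(t+1)\le 2\log t$.

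\textbf{Estimate \eqref{eq:v-size}.} I write
\[
p^k-p^{k+1}=(r_k-r_{k+1})\,u(\varphi_k)+r_{k+1}\bigl(u(\varphi_k)-u(\varphi_{k+1})\bigr).
\]
The first term has norm $d_k-d_{k+1}\le 2\alpha_kd_k^3\le 2\eta\alpha_kd_k^2$, using $d_k\le\eta^2\le\eta$. The second term has norm at most $\beta_k=\eta\alpha_kd_k^2$. Dividing by $\alpha_k$ gives $\|v_k\|\le 3\eta d_k^2$, so $\widehat C_2=3$.

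\textbf{Estimate \eqref{eq:a-size}.} Since
\[
a_k=\sum_{\ell\ge k}\alpha_\ell\|v_\ell\|^2\le 9\eta^2\sum_{\ell\ge k}\alpha_\ell d_\ell^4,
\]
it suffices to bound this tail. By \eqref{eq:d-first}, $\alpha_\ell d_\ell^3\ge (d_\ell-d_{\ell+1})/2$, but that inequality points the wrong way. Instead I compare with an integral directly:
\[
\sum_{\ell\ge k}\alpha_\ell d_\ell^4\le\tfrac12\int_{t-1}^\infty \frac{ds}{s(\log s)^2}=\frac{1}{2\log(t-1)}\le d_k^2,
\]
with the constant absorbed using $K_0>e^4$. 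This yields $0\le a_k\le C\eta^2 d_k^2$. Finiteness of $a_k$ follows at the same time.

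\textbf{Estimate \eqref{eq:v-consecutive}.} This is the main obstacle, since it requires a difference quotient bounded by $\eta$ times the chord length, not just a size bound. I write $v_k=\alpha_k^{-1}(d_{k+1}-d_k)u_k+\alpha_k^{-1}r_{k+1}(u_k-u_{k+1})$, where $u_k:=u(\varphi_k)$. Then I split into a radial part and a tangential part, and control how each coefficient changes from $k$ to $k+1$.

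For the radial part, I use the second-order mean value theorem on $\phi$. This shows that $\alpha_k^{-1}(d_k-d_{k+1})$ changes by $O(\alpha_k d_k^3)$ in $k$. The change in $u_k$ contributes a further factor $\beta_k$.

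For the tangential part, I expand $u_k-u_{k+1}=\beta_k\,u_k^{\perp}+O(\beta_k^2)$. The coefficient $\beta_k/\alpha_k=\eta d_k^2$ changes by $\eta|d_k^2-d_{k+1}^2|=O(\eta\alpha_kd_k^4)$, and the rotation of $u_k^\perp$ contributes $\eta d_k^2\beta_k$. The $O(\beta_k^2)/\alpha_k$ remainders are $O(\eta^2\alpha_kd_k^4)$.

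Collecting terms, every contribution is at most $C\eta\alpha_kd_k^2\cdot\max(d_k,\eta d_k^2)$, which is at most $C\eta\beta_k$ (recall $\beta_k=\eta\alpha_kd_k^2$, and $d_k\le\eta$ converts the radial terms). Lemma~\ref{lem:chord-lower-bound} gives $\beta_k\le\pi\|p^{k+1}-p^k\|$, which closes \eqref{eq:v-consecutive}.

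The delicate bookkeeping is making sure every error term carries a factor $\eta$ and is comparable to $\beta_k$. The radial terms of size $\alpha_kd_k^3$ are where this could fail. I would handle them using $d_k\le\eta^2$, which gives $\alpha_kd_k^3\le\eta\,\alpha_kd_k^2=\beta_k$.
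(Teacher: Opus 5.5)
Your proposal is correct and follows essentially the paper's route. You use the mean-value bound for $d_k-d_{k+1}$, an integral-tail comparison for $a_k$, and a radial/tangential decomposition of $v_k$ in the rotating frame $(u_k,u_k^{\perp})$, with the frame rotation controlled by $\beta_k$, closed by the chord bound $\beta_k\le\pi\|p^{k+1}-p^k\|$; your direct two-term estimate of $\|v_k\|$ is only a minor shortcut.

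Two small bookkeeping slips need fixing:
\begin{enumerate}[label=\textup{(\roman*)}]
\item In your last line, $d_k\le\eta^2$ gives $\alpha_kd_k^3\le\eta^2\alpha_kd_k^2=\eta\beta_k$. This is the bound you need; as written, $\le\beta_k$ loses the factor $\eta$.
\item The tangential coefficient is $\eta r_{k+1}d_k^2$, not $\eta d_k^2$. The extra variation $\eta|r_{k+2}-r_{k+1}|d_{k+1}^2\le 2\eta\alpha_kd_k^5$ is harmless.
\end{enumerate}
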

\begin{proof}
Set
$
    u_k:=u(\varphi_k),
    \;
    e_k:=(-\sin\varphi_k,\cos\varphi_k),
    \;
    \beta_k:=\varphi_{k+1}-\varphi_k
             =\eta\alpha_kd_k^2.
$
Choose \(K_0\) such that
\eqref{eq:K0-conditions} holds. Then, for every \(k\geq0\),
\[
r_k\geq\frac12,
\qquad
0\leq\beta_k\leq1,
\qquad
d_k^2\leq\eta,
\qquad
d_k^3\leq\eta^2.
\label{eq:app-K-conditions}
\]

\noindent To prove \eqref{eq:d-first}, define
\[
d(t):=(\log t)^{-1/2},\qquad t\geq K_0.
\]
Since
\[
d'(t)
=-\frac{1}{2t(\log t)^{3/2}}
=-\frac{d(t)^3}{2t},
\]
and \(d_k=d(k+K_0)\), the mean-value theorem implies that, for some
\(\xi_k\in(k,k+1)\),
\begin{align*}
0<d_k-d_{k+1}
&=-d'(\xi_k+K_0)\\
&=\frac{1}{2(\xi_k+K_0)
[\log(\xi_k+K_0)]^{3/2}}\\
&\leq
\frac{1}{2(k+K_0)[\log(k+K_0)]^{3/2}}\\
&\leq
\frac{2}{2(k+K_0+1)[\log(k+K_0)]^{3/2}}\\
&=2\alpha_k d_k^3,
\end{align*}
where the second inequality follows from \(k+K_0+1\leq 2(k+K_0)\).
Thus, \eqref{eq:d-first} holds with
\(\widehat C_1=2\).

Note that
\begin{equation}
    \left|d_{k+1}^2-d_k^2\right|=\left|d_{k+1}-d_k\right|\left|d_{k+1}+d_k\right|\le 2d_k\left|d_{k+1}-d_k\right|
    \leq 4\alpha_kd_k^4.
    \label{eq:app-d-square}
\end{equation}

Define the scaled radial increment
\[
    \rho_k
    :=\frac{r_{k+1}-r_k}{\alpha_k}
    =\frac{d_k-d_{k+1}}{\alpha_k}.
\]
We have that
\begin{equation}
    0\leq \rho_k\leq 2d_k^3.
    \label{eq:app-rho-size}
\end{equation}
Define
$
    F(t):=2(t+1)\bigl(d(t)-d(t+1)\bigr).$
Then \(\rho_k=F(k+K_0)\).  Since
\[
    d(t)-d(t+1)
    =\frac12\int_t^{t+1}
      \frac{ds}{s(\log s)^{3/2}},
\]
we can write
\[
    F(t)=(t+1)\int_t^{t+1}f(s)\,ds,
    \qquad
    f(s):=\frac{1}{s(\log s)^{3/2}}.
\]
Differentiation gives
\[
    F'(t)
    =\int_t^{t+1}\bigl(f(s)+(t+1)f'(s)\bigr)\,ds,
\]
where
\[
    f'(s)
    =-\frac{1}{s^2(\log s)^{3/2}}
     -\frac{3}{2s^2(\log s)^{5/2}}.
\]
Using the preceding representation of $F'(t)$, together with the mean-value theorem, there exists $\widehat s\in(t,t+1)$ such that
\begin{eqnarray*}
|F'(t)|
&\leq& \frac{1}{\widehat s(\log(\widehat s))^{3/2}}
+(t+1)\left(\frac{1}{\widehat s^2(\log(\widehat s))^{3/2}}
+\frac{3}{2\widehat s^2(\log(\widehat s))^{5/2}}\right)\nn\\
&\leq&\frac{1}{t(\log(t))^{3/2}}
+(t+1)\left(\frac{1}{t^2(\log(t))^{3/2}}
+\frac{3}{2t^2(\log(t))^{5/2}}\right)\nn\\
&\leq&\frac{3}{t(\log(t))^{3/2}}.
\end{eqnarray*}

Since \(t\ge e^{4}\), we have
$
\frac{t+1}{t}
=1+\frac1t
\le
\frac43,
\;
1+\frac{3}{2\log t}
\le
1+\frac38
=
\frac{11}{8}.$
Hence,
$
1+\frac{t+1}{t}
\left(
1+\frac{3}{2\log t}
\right)
\le
1+\frac43\cdot\frac{11}{8}
=
\frac{17}{6}
<
3,
$
which implies
$
|F'(t)|
\le
\frac{17}{6t(\log t)^{3/2}}
<
\frac{3}{t(\log t)^{3/2}}.
$

By the mean-value theorem, there exists
$\zeta_k\in(k+K_0,k+K_0+1)$ such that
\begin{eqnarray*}
 &&|\rho_{k+1}-\rho_k|
 =|F(k+K_0+1)-F(k+K_0)|
 =|F'(\zeta_k)|\nn\\
 &&
 \leq
 \frac{3}{\zeta_k(\log \zeta_k)^{3/2}}\leq\frac{3}{(k+K_0)(\log (k+K_0))^{3/2}}\leq\frac{6}{(k+K_0+1)(\log (k+K_0))^{3/2}}
 \leq 12\alpha_k d_k^3.
\end{eqnarray*}
\noindent Next, since
\[
    u_{k+1}=\cos\beta_k\,u_k+\sin\beta_k\,e_k,
\]
we have
\begin{eqnarray}
    &&v_k=\frac{r_ku_k-r_{k+1}u_{k+1}}{\alpha_k}=A_ku_k-B_ke_k,\nn\\
    &&A_k:=-\rho_k+c_k,
    \;\;
    c_k:=\frac{r_{k+1}(1-\cos\beta_k)}{\alpha_k},
    \;\;
    B_k:=\frac{r_{k+1}\sin\beta_k}{\alpha_k}.
    \label{eq:app-v-frame}
\end{eqnarray}
%
Using \(1-\cos t\leq t^2/2\), \(|\sin t|\leq|t|\), and the definition of
\(\beta_k\), we obtain
\begin{equation}
    0\leq c_k\leq \frac{1}{2}\eta^2\alpha_kd_k^4,
    \qquad
    |B_k|\leq\eta d_k^2.
    \label{eq:app-c-B-size}
\end{equation}
Combining \eqref{eq:app-rho-size}, \eqref{eq:app-v-frame},
\eqref{eq:app-c-B-size}, and \(d_k\leq\eta/2\) and $\eta\alpha_k d_k^2\le 1$, we conclude that
\[
    \norm{v_k}
    \leq |A_k|+|B_k|
    \leq 2 d_k^3+\frac{1}{2}\eta^2 \alpha_k d_k^4 +\eta d_k^2
    \leq 2 \cdot\frac{\eta}{2}\cdot d_k^2+\frac{1}{2}\eta d_k^2+\eta d_k^2\le\frac{5}{2}\eta d_k^2.
\]
This proves \eqref{eq:v-size} by setting $\widehat C_2:=5/2$.

Next, we show (\ref{eq:v-consecutive}).
First, monotonicity of \(\alpha_k\) and \(d_k\) gives
\begin{equation}
    |c_{k+1}-c_k|
    \leq c_{k+1}+c_k
    \leq \eta^2\alpha_kd_k^4.
    \label{eq:app-c-difference}
\end{equation}
 Moreover,
\eqref{eq:d-first} and \eqref{eq:app-d-square} imply
\[
    |r_{k+2}-r_{k+1}|\leq 2 \alpha_{k+1}d_{k+1}^3
\]
and
\begin{eqnarray}\label{addkey}
    \left|
       r_{k+2}d_{k+1}^2-r_{k+1}d_k^2
    \right|
    \leq 6\alpha_kd_k^4.
\end{eqnarray}

We next derive an upper bound on $|B_{k+1}-B_k|$. Recall that
\[
B_k=\eta r_{k+1}d_k^2\sinc(\beta_k),
\qquad
B_{k+1}=\eta r_{k+2}d_{k+1}^2\sinc(\beta_{k+1}),
\]
where
\[
\sinc(t):=
\begin{cases}
\dfrac{\sin t}{t}, & t\neq 0,\\[1mm]
1, & t=0.
\end{cases}
\]
Hence
\begin{align*}
B_{k+1}-B_k
={}&
\eta r_{k+2}d_{k+1}^2\sinc(\beta_{k+1})
-\eta r_{k+1}d_k^2\sinc(\beta_k)\\
={}&
\eta\bigl(r_{k+2}d_{k+1}^2-r_{k+1}d_k^2\bigr)
+\eta r_{k+2}d_{k+1}^2
   \bigl(\sinc(\beta_{k+1})-1\bigr)
-\eta r_{k+1}d_k^2
   \bigl(\sinc(\beta_k)-1\bigr).
\end{align*}
Therefore,
\begin{align*}
|B_{k+1}-B_k|
\le{}&
\eta\left|r_{k+2}d_{k+1}^2-r_{k+1}d_k^2\right|+
\eta r_{k+2}d_{k+1}^2
\left|\sinc(\beta_{k+1})-1\right|+
\eta r_{k+1}d_k^2
\left|\sinc(\beta_k)-1\right|.
\end{align*}
Since \(0\le r_k\le 1\), \(d_{k+1}\le d_k\), and, for \(|t|\le 1\),
$
|\sinc(t)-1|\le \frac{t^2}{6},
$
we obtain
\begin{align*}
&\eta r_{k+2}d_{k+1}^2
\left|\sinc(\beta_{k+1})-1\right|
+
\eta r_{k+1}d_k^2
\left|\sinc(\beta_k)-1\right|\\
&\qquad\le
\frac{\eta}{6}
\left(
d_{k+1}^2\beta_{k+1}^2+d_k^2\beta_k^2
\right)\\
&\qquad\le
\frac{\eta d_k^2}{6}
\left(
\beta_{k+1}^2+\beta_k^2
\right).
\end{align*}
Thus,
\[
|B_{k+1}-B_k|
\le
\eta\left|r_{k+2}d_{k+1}^2-r_{k+1}d_k^2\right|
+
\frac{1}{6}\eta d_k^2
\left(\beta_k^2+\beta_{k+1}^2\right).
\]

Using (\ref{addkey})
together with
\[
\beta_k=\eta\alpha_kd_k^2,
\qquad
\beta_{k+1}\le\beta_k,
\]
we further have
\begin{align*}
\frac{1}{6}\eta d_k^2(\beta_k^2+\beta_{k+1}^2)
&\le
\frac{1}{3}\eta d_k^2\beta_k^2=
\frac{1}{3}\eta^3\alpha_k^2d_k^6=
\frac{1}{3}(\eta^2\alpha_kd_k^2)\,
\eta\alpha_kd_k^4\le
\frac{1}{3}\eta\alpha_kd_k^4,
\end{align*}
where the last inequality uses
\(0<\eta\alpha_kd_k^2\le1\). Consequently,
\[
|B_{k+1}-B_k|
\le
\frac{19}{3}\eta\alpha_kd_k^4.
\]

Equations \eqref{eq:app-rho-size}, \eqref{eq:app-c-B-size} and
\eqref{eq:app-c-difference}, together with the preceding estimate for $|\rho_{k+1}-\rho_k|$, yield
\begin{align}
    |A_k|
    &\leq 2 d_k^3+\frac{\eta^2}{2}\alpha_kd_k^4,
    \label{eq:app-A-size}\\
    |A_{k+1}-A_k|
    &\leq 12\alpha_kd_k^3
          +\eta^2\alpha_kd_k^4.
    \label{eq:app-A-difference}
\end{align}
The orthonormal frames \((u_k,e_k)\) and
\((u_{k+1},e_{k+1})\) differ by a rotation through \(\beta_k\); hence
\[
    \norm{u_{k+1}-u_k}=2\sin(\beta_k/2)\leq\beta_k,
    \qquad
    \norm{e_{k+1}-e_k}=2\sin(\beta_k/2)\leq\beta_k.
\]
Using the representation \eqref{eq:app-v-frame}, we therefore obtain
\begin{align}
    \norm{v_{k+1}-v_k}
    &\leq |A_{k+1}-A_k|+|B_{k+1}-B_k|\notag\\
    &+\bigl(|A_{k+1}|+|B_{k+1}|\bigr)\beta_k\notag\\
    &\leq 12 \alpha_k d_k^3+\left(2\eta^2\alpha_k+\frac{19}{3}\eta\alpha_k\right)d_k^4+2\eta\alpha_k d_k^5+\frac{1}{2}\eta^3\alpha_k^2d_k^6.
    \label{eq:app-v-difference-preliminary}
\end{align}

Indeed, dividing the right-hand side of \eqref{eq:app-v-difference-preliminary} by
$\eta^2\alpha_kd_k^2$ and using
$d_k\leq\eta^2$, $\eta\leq 1/2$, $\alpha_k\leq 1$, and
$\eta\alpha_kd_k^2\leq 1$, we obtain
\[
    12+2\eta^4+\frac{19}{3}\eta^3+2\eta^5
    +\frac12\eta^9
    <\frac{27}{2},
\]
where the last inequality follows from $\eta\leq 1/2$.
Thus
\begin{equation}
    \norm{v_{k+1}-v_k}
    \leq \frac{27}{2}\eta^2\alpha_kd_k^2.
    \label{eq:app-v-difference-final}
\end{equation}
On the other hand, Lemma~\ref{lem:chord-lower-bound} gives
\begin{equation}
    \norm{p^{k+1}-p^k}
    \geq\frac{\beta_k}{\pi}
    =\frac{\eta\alpha_kd_k^2}{\pi}.
    \label{eq:app-step-lower-bound}
\end{equation}
Combining \eqref{eq:app-v-difference-final} and
\eqref{eq:app-step-lower-bound} proves
\[
    \norm{v_{k+1}-v_k}
    \leq \frac{27}{2}\pi\eta\norm{p^{k+1}-p^k},
\]
which is \eqref{eq:v-consecutive} by setting $\widehat C_3:= \frac{27}{2}\pi$.

\medskip
Next, we prove \eqref{eq:a-size}.
Using \eqref{eq:v-size},
\begin{align*}
    a_k=\sum_{\ell=k}^{\infty}\alpha_\ell\norm{v_\ell}^2\leq (5/2)^2\eta^2
      \sum_{\ell=k}^{\infty}
      \frac{1}{2(\ell+K_0+1)\log^2(\ell+K_0)}.
\end{align*}
The integral test gives
\[
    \sum_{\ell=k}^{\infty}
      \frac{1}{(\ell+K_0+1)\log^2(\ell+K_0)}
      \leq\sum_{n=k+K_0}^{\infty}\frac{1}{n\log^2 n}
    \leq\frac{2}{\log(k+K_0)}
    =2 d_k^2.
\]
Therefore
\[
    0\leq a_k\leq \frac{25}{4} \eta^2d_k^2,
\]
which is \eqref{eq:a-size} by setting $\widehat C_4:=\frac{25}{4} $.
\end{proof}

\section{Local half-turn  estimate}\label{app:local-whitney}

\begin{proposition}\label{prop:local-half-turn}
Suppose $i>j$ and
$
0\le \phi:=\varphi_i-\varphi_j\le\pi.
$
Then,
$$
L_{j,i}:=
\sum_{\ell=j}^{i-1}\|p^{\ell+1}-p^\ell\|
\le
\left(1+\frac{\pi}{\sqrt2}\right)\|p^i-p^j\|.
$$
Consequently,
\begin{eqnarray}\label{E1:2}
\|v_i-v_j\|
\le
\widehat C_5\eta\|p^i-p^j\|,
\end{eqnarray}
where $\widehat C_5=\frac{27}{2}\pi\left(1+\frac{\pi}{\sqrt{2}}\right)$.
\end{proposition}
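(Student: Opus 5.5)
The argument has two parts. First, bound the arc length $L_{j,i}$ by the chord $\|p^i-p^j\|$. Second, telescope the consecutive estimate \eqref{eq:v-consecutive} of Lemma~\ref{lem:elementary}.

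\textbf{Upper bound on the arc length.} For each step I use the decomposition from the proof of Lemma~\ref{lem:chord-lower-bound}:
\[
\|p^{\ell+1}-p^\ell\|\le |r_{\ell+1}-r_\ell|+2\sqrt{r_\ell r_{\ell+1}}\sin(\beta_\ell/2)\le (r_{\ell+1}-r_\ell)+\beta_\ell .
\]
This uses $\sqrt{a^2+b^2}\le a+b$, $r_\ell\le 1$, $\sin t\le t$, and the monotonicity of $r_\ell$. Summing from $\ell=j$ to $i-1$ gives
\[
L_{j,i}\le (r_i-r_j)+\phi .
\]

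\textbf{Lower bound on the chord.} By the same law-of-cosines identity,
\[
\|p^i-p^j\|^2=(r_i-r_j)^2+4r_ir_j\sin^2(\phi/2).
\]
Since $\phi\in[0,\pi]$, concavity gives $\sin(\phi/2)\ge \phi/\pi$. Together with $r_ir_j\ge 1/4$, this yields
\[
\|p^i-p^j\|\ge \max\{\,r_i-r_j,\ \phi/\pi\,\}.
\]
Actually $\sqrt{r_ir_j}\ge 1/2$ gives $2\sqrt{r_ir_j}\sin(\phi/2)\ge \phi/\pi$. To match the stated constant, I would instead use $\|p^i-p^j\|\ge \bigl((r_i-r_j)+\tfrac{\sqrt2}{\pi}\phi\bigr)/\sqrt2$, which follows from $\sqrt{a^2+b^2}\ge (a+b)/\sqrt2$. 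The stated constant $1+\pi/\sqrt2$ then comes out from
\[
(r_i-r_j)+\phi\le \|p^i-p^j\|+\tfrac{\pi}{\sqrt2}\|p^i-p^j\| .
\]
Here $r_i-r_j\le\|p^i-p^j\|$, and $\phi\le \tfrac{\pi}{\sqrt2}\|p^i-p^j\|$ follows from $\|p^i-p^j\|\ge 2\sqrt{r_ir_j}\sin(\phi/2)$ together with a sharper lower bound on the product $r_ir_j$. Since $d_k\le\eta^2\le 1/4$, we have $r_k\ge 3/4$, so $2\sqrt{r_ir_j}\ge 3/2$. Then $\sin(\phi/2)\ge\phi/\pi$ gives $\phi\le \tfrac{2\pi}{3}\|p^i-p^j\|\le\tfrac{\pi}{\sqrt2}\|p^i-p^j\|$. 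The main point to check is precisely this constant bookkeeping. The essential structural fact is that on a half-turn both the radial and the angular parts of the arc are controlled by the chord.

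\textbf{Gradient estimate.} By the triangle inequality and \eqref{eq:v-consecutive},
\[
\|v_i-v_j\|\le\sum_{\ell=j}^{i-1}\|v_{\ell+1}-v_\ell\|\le \tfrac{27}{2}\pi\eta\, L_{j,i}.
\]
Inserting the arc–chord bound yields \eqref{E1:2} with $\widehat C_5=\tfrac{27}{2}\pi(1+\pi/\sqrt2)$.
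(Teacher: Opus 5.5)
Your proposal is correct and follows essentially the same route as the paper. You split each step into radial and angular parts, sum to get $L_{j,i}\le (r_i-r_j)+\phi$, bound both pieces by the chord via the polar-distance identity, and telescope \eqref{eq:v-consecutive}. The only difference is bookkeeping: the paper keeps the factor $r_i$ on the angular term and uses $r_i\phi\le\frac{\pi}{2}\sqrt{r_i/r_j}\,\|p^i-p^j\|\le\frac{\pi}{\sqrt2}\|p^i-p^j\|$ with $r_j\ge 1/2$, whereas you bound $r_\ell\le 1$ and compensate with the sharper fact $r_k\ge 3/4$ (valid since $d_k\le\eta^2\le 1/4$). That gives $\phi\le\frac{2\pi}{3}\|p^i-p^j\|$, and the intermediate detours (the $\max$ bound yielding $1+\pi$, and the $(a+b)/\sqrt2$ idea) can simply be deleted.
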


\begin{proof}
Since $r_k$ is increasing,
$
p^{\ell+1}-p^\ell
=
(r_{\ell+1}-r_\ell)u_{\ell+1}
+
r_\ell(u_{\ell+1}-u_\ell).
$
Moreover,
$$
\|u_{\ell+1}-u_\ell\|
=
2\sin\!\left(\frac{\varphi_{\ell+1}-\varphi_\ell}{2}\right)
\le
\varphi_{\ell+1}-\varphi_\ell.
$$
Hence
\[
\|p^{\ell+1}-p^\ell\|
\le
r_{\ell+1}-r_\ell
+
r_i(\varphi_{\ell+1}-\varphi_\ell).
\]
Summing from $\ell=j$ to $i-1$ yields
\[
L_{j,i}
\le
r_i-r_j+r_i\phi.
\]
Define
$
D:=\|p^i-p^j\|.
$
The polar-distance identity gives
$
D^2
=
(r_i-r_j)^2
+
4r_ir_j\sin^2\!\left(\frac{\phi}{2}\right).
$
Thus \begin{eqnarray}\label{add3}r_i-r_j\le D.\end{eqnarray}
 Since
$0\le\phi/2\le\pi/2$,
$
\sin\!\left(\frac{\phi}{2}\right)\ge\frac{\phi}{\pi},
$
and therefore
$
D
\ge
\frac{2}{\pi}\sqrt{r_ir_j}\,\phi.
$
Hence
\[
r_i\phi
\le
\frac{\pi}{2}
\sqrt{\frac{r_i}{r_j}}\,D
\le
\frac{\pi}{\sqrt2}D,
\]
where we used $r_i\le1$ and $r_j\ge1/2$. Consequently,
\begin{eqnarray}\label{add4}
L_{j,i}
\le
\left(1+\frac{\pi}{\sqrt2}\right)D.
\end{eqnarray}

\noindent Finally, summing \eqref{eq:v-consecutive} from $\ell=j$ to $i-1$, and using (\ref{add3}), (\ref{add4}), we have
\[
\|v_i-v_j\|
\le
\sum_{\ell=j}^{i-1}\|v_{\ell+1}-v_\ell\|
\le
\frac{27}{2}\pi\eta
\sum_{\ell=j}^{i-1}\|p^{\ell+1}-p^\ell\|
\le
\frac{27}{2}\pi(1+\frac{\pi}{\sqrt{2}})\eta\|p^i-p^j\|.
\]
Thus, (\ref{E1:2}) holds with $\widehat C_5:=\frac{27}{2}\pi(1+\frac{\pi}{\sqrt{2}})$.
\end{proof}

\begin{proposition}[Local quadratic Whitney remainder]
\label{prop:local-quadratic}
Under the assumptions of Proposition~\ref{prop:local-half-turn},
\[
\left|
a_i-a_j-\langle v_j,p^i-p^j\rangle
\right|
\le
\widehat C_6 \eta\|p^i-p^j\|^2,
\]
where $\widehat C_6:=\frac{27}{2}\pi\left(1+\frac{\pi}{\sqrt{2}}\right)^3$.
\end{proposition}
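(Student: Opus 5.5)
The plan is to telescope the value remainder along the orbit and then apply the consecutive-gradient bound \eqref{eq:v-consecutive} together with the path-length comparison of Proposition~\ref{prop:local-half-turn}. By \eqref{vkak} we have $a_{\ell+1}-a_\ell=-\alpha_\ell\|v_\ell\|^2$ and $p^{\ell+1}-p^\ell=-\alpha_\ell v_\ell$. Hence
\[
a_{\ell+1}-a_\ell=\langle v_\ell,p^{\ell+1}-p^\ell\rangle,
\]
that is, each step is exactly first-order consistent with its own base point. Summing from $\ell=j$ to $i-1$ gives
\[
a_i-a_j-\langle v_j,p^i-p^j\rangle
=\sum_{\ell=j}^{i-1}\langle v_\ell-v_j,\,p^{\ell+1}-p^\ell\rangle ,
\]
because $\sum_{\ell=j}^{i-1}(p^{\ell+1}-p^\ell)=p^i-p^j$.

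Next we bound each summand. For $j\le\ell\le i$, telescoping \eqref{eq:v-consecutive} gives
\[
\|v_\ell-v_j\|\le \tfrac{27}{2}\pi\eta\,L_{j,\ell}\le \tfrac{27}{2}\pi\eta\,L_{j,i},
\]
where $L_{j,\ell}$ is as in \eqref{Lij}. Therefore
\[
\bigl|a_i-a_j-\langle v_j,p^i-p^j\rangle\bigr|
\le \tfrac{27}{2}\pi\eta\,L_{j,i}\sum_{\ell=j}^{i-1}\|p^{\ell+1}-p^\ell\|
=\tfrac{27}{2}\pi\eta\,L_{j,i}^2 .
\]

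Finally, since $0\le\varphi_i-\varphi_j\le\pi$, Proposition~\ref{prop:local-half-turn} gives $L_{j,i}\le(1+\pi/\sqrt2)\|p^i-p^j\|$. This yields the bound with constant $\frac{27}{2}\pi(1+\pi/\sqrt2)^2$, which is at most the stated $\widehat C_6=\frac{27}{2}\pi(1+\pi/\sqrt2)^3$ because $1+\pi/\sqrt2\ge1$.

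There is no real obstacle here. The only point needing care is the exact identity $a_{\ell+1}-a_\ell=\langle v_\ell,p^{\ell+1}-p^\ell\rangle$, which is exactly what the choice of $a_k$ in \eqref{vkak} was designed to give. The path-to-chord comparison, the only geometric ingredient, is already supplied by Proposition~\ref{prop:local-half-turn}.
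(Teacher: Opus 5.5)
Your proof is correct and follows the paper's argument: the same telescoping identity $a_i-a_j-\langle v_j,p^i-p^j\rangle=\sum_{\ell=j}^{i-1}\alpha_\ell\langle v_j-v_\ell,v_\ell\rangle$, a bound by a multiple of $\eta L_{j,i}^2$, and then the path-to-chord comparison of Proposition~\ref{prop:local-half-turn}. The only difference is in how you bound $\|v_\ell-v_j\|$: you telescope \eqref{eq:v-consecutive} directly, whereas the paper invokes \eqref{E1:2} and then uses $\|p^\ell-p^j\|\le L_{j,i}$. Your version avoids a redundant path--chord round trip, needs the half-turn hypothesis only for the pair $(j,i)$, and gives the sharper constant $\frac{27}{2}\pi(1+\pi/\sqrt2)^2\le\widehat C_6$.
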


\begin{proof}
From
\[
p^{\ell+1}-p^\ell=-\alpha_\ell v_\ell
\]
and
\[
a_{\ell+1}-a_\ell=-\alpha_\ell\|v_\ell\|^2,
\]
we obtain
\[
p^i-p^j
=
-\sum_{\ell=j}^{i-1}\alpha_\ell v_\ell,
\qquad
a_i-a_j
=
-\sum_{\ell=j}^{i-1}\alpha_\ell\|v_\ell\|^2.
\]
Consequently,
\[
a_i-a_j-\langle v_j,p^i-p^j\rangle
=
\sum_{\ell=j}^{i-1}
\alpha_\ell
\langle v_j-v_\ell,v_\ell\rangle.
\]
For every $\ell\in\{j,\ldots,i\}$, the angular separation
$\varphi_\ell-\varphi_j$ also lies in $[0,\pi]$. Thus
Proposition~\ref{prop:local-half-turn} yields
\[
\|v_j-v_\ell\|
\le
\widehat C_5\eta\|p^\ell-p^j\|.
\]
Using
\[
\alpha_\ell\|v_\ell\|
=
\|p^{\ell+1}-p^\ell\|,
\]
we have
\[
\left|
a_i-a_j-\langle v_j,p^i-p^j\rangle
\right|
\le
\widehat C_5\eta
\sum_{\ell=j}^{i-1}
\|p^\ell-p^j\|
\,
\|p^{\ell+1}-p^\ell\|.
\]
Since
$
\|p^\ell-p^j\|
\le
L_{j,i},
$
it follows that
$
\left|
a_i-a_j-\langle v_j,p^i-p^j\rangle
\right|
\le
\widehat C_5\eta L_{j,i}^2.
$
Applying Proposition~\ref{prop:local-half-turn} once more proves the claim by setting $\widehat C_6:=\frac{27}{2}\pi\left(1+\frac{\pi}{\sqrt{2}}\right)^3$.
\end{proof}

\section{Different-turn separation}

\begin{lemma}
\label{lem:separation}
If \(i>j\) and \(\varphi_i-\varphi_j\geq\pi\), then
\[
    \norm{p^i-p^j}
    \geq c\,\max\{d_i,d_j\},
    \qquad
    c:=1-e^{-\pi/2}>0.
\]
\end{lemma}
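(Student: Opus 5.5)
The plan is to show that one half-turn of the spiral already forces a definite \emph{radial} gap, so that the angular term in the polar-distance identity is never needed. Since $i>j$ and $d_k$ is decreasing, $\max\{d_i,d_j\}=d_j$. Moreover, $\|p^i\|=r_i$ and $\|p^j\|=r_j$, so the reverse triangle inequality gives
\[
\|p^i-p^j\|\ \ge\ r_i-r_j\ =\ d_j-d_i .
\]
It therefore suffices to prove the multiplicative decay
\[
d_i\ \le\ e^{-\pi/2}\,d_j\qquad\text{whenever } \varphi_i-\varphi_j\ge\pi,
\]
because this yields $d_j-d_i\ge(1-e^{-\pi/2})d_j=c\,d_j$.

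\textbf{Step 1: one-step lower bound on the radial decrease.} This is the counterpart of \eqref{eq:d-first}. As in the proof of Lemma~\ref{lem:elementary}, I write $d_k-d_{k+1}=-d'(\xi_k+K_0)$ with $\xi_k\in(k,k+1)$. Using $d'(t)=-d(t)^3/(2t)$ and the fact that $t\mapsto 1/(t(\log t)^{3/2})$ is decreasing, I obtain
\[
d_k-d_{k+1}\ \ge\ \frac{1}{2(k+K_0+1)\,[\log(k+K_0+1)]^{3/2}}=\alpha_k d_{k+1}^3 .
\]

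\textbf{Step 2: logarithmic decrement versus angle.} From $\log x\ge 1-1/x$, the per-step decrement satisfies
\[
\log\frac{d_k}{d_{k+1}}\ \ge\ \frac{d_k-d_{k+1}}{d_k}\ \ge\ \alpha_k d_k^2\Bigl(\frac{d_{k+1}}{d_k}\Bigr)^3 .
\]
By \eqref{eq:d-first}, $d_{k+1}/d_k\ge 1-2\alpha_kd_k^2$. After enlarging $K_0$ if necessary, consistent with \eqref{eq:K0-conditions}, this ratio is at least $(1/2)^{1/3}$, and in fact it is already close to $1$. Hence
\[
\log\frac{d_k}{d_{k+1}}\ \ge\ \tfrac12\alpha_kd_k^2=\frac{\beta_k}{2\eta}\ \ge\ \beta_k ,
\]
where the last inequality uses $\eta\le 1/2$.

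\textbf{Step 3: summation.} Summing from $k=j$ to $i-1$ gives
\[
\log\frac{d_j}{d_i}\ \ge\ \varphi_i-\varphi_j\ \ge\ \pi\ \ge\ \frac{\pi}{2},
\]
so $d_i\le e^{-\pi/2}d_j$, which closes the argument. The conclusion even has slack; the weaker constant in the exponent presumably just leaves room in the constants.

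\textbf{Main obstacle.} The only delicate point is Step~1, which is the lower companion to Lemma~\ref{lem:elementary}. One has to check that the monotonicity of $1/(t(\log t)^{3/2})$ and the comparison $d_{k+1}\approx d_k$ hold uniformly in $k$ under \eqref{eq:K0-conditions}, using $K_0>e^4$ and $2\alpha_kd_k^2\le d_0^2$, which is small. No angular (sine) estimate is required, which is why the case of angles near $2\pi$ causes no difficulty.
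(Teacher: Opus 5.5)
Your proof is correct and follows essentially the paper's route. You reduce, via the reverse triangle inequality, to the radial gap $d_j-d_i$, and then show $\log(d_j/d_i)\ge(\varphi_i-\varphi_j)/(2\eta)$ by comparing each $\beta_k$ with the one-step decrement of $\log d_k$ and summing. The paper instead telescopes $\log\log(k+K_0)$ using $\int_n^{n+1}\frac{dt}{t\log t}\ge\frac{1}{2(n+1)\log n}$, whereas you use the mean-value theorem together with $\log x\ge 1-1/x$. You also do not need to enlarge $K_0$: the bound $2\alpha_kd_k^2\le d_0^2\le\eta^4$ already gives $(d_{k+1}/d_k)^3\ge 1/2$.
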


\begin{proof}
For every integer \(n\geq2\), monotonicity of
\(t\mapsto1/(t\log t)\) gives
\[
    \log\log(n+1)-\log\log n
    =
    \int_n^{n+1}\frac{dt}{t\log t}
    \geq
    \frac{1}{(n+1)\log(n+1)}
    \geq
    \frac{1}{2(n+1)\log n}.
\]
Thus, setting \(n=k+K_0\) (where $K_0$ is defined in Lemma \ref{lem:elementary}) in the preceding estimate yields
\begin{align*}
    \varphi_i-\varphi_j
    &=
    \frac{\eta}{2}
    \sum_{k=j}^{i-1}
    \frac{1}{(k+K_0+1)\log(k+K_0)}\\
    &\leq
    \eta
    \log\left(\frac{\log(i+K_0)}{\log(j+K_0)}\right).
\end{align*}
If \(\varphi_i-\varphi_j\geq\pi\), then
\[
    \frac{\log(i+K_0)}{\log(j+K_0)}
    \geq e^{\pi/\eta},
    \qquad
    \frac{d_i}{d_j}
    =
    \sqrt{\frac{\log(j+K_0)}{\log(i+K_0)}}
    \leq e^{-\pi/(2\eta)}
    \leq e^{-\pi/2}.
\]
Since \(d_i\leq d_j\), the reverse triangle inequality yields
$
    \norm{p^i-p^j}
    \geq
    \left|\norm{p^i}-\norm{p^j}\right|
    =
    d_j-d_i
    \geq
    (1-e^{-\pi/2})d_j.$
\end{proof}

\end{document}